\providecommand{\dist}{\mathop{\rm dist}\nolimits}
\newtheorem{theorem}{Theorem}[section] % 1st argument is your name for it
\newtheorem{lemma}[theorem]{Lemma}     % 2nd argument is what is printed
\newtheorem{theorem A}[theorem_l]{Theorem A}
\newtheorem{theorem B}[theorem_l]{Theorem B}
\newtheorem{theorem C}[theorem_l]{Theorem C}
\newtheorem{theorem D}[theorem_l]{Theorem D}
\newtheorem{theorem E}[theorem_l]{Theorem E}
\newtheorem{theorem F}[theorem_l]{Theorem F}
\newtheorem{theorem G}[theorem_l]{Theorem G}
\newtheorem{lemma A}{Lemma 6.1}
\title[Euclidean and Hyperbolic Lengths of Images of Arcs]% end with percent
 {Euclidean and Hyperbolic Lengths of Images of Arcs} % 
\author{A. F. Beardon \and T. K. Carne}
\begin{document}
\maketitle

\begin{abstract}
Let $f$ be a function that is analytic in the unit disc. We give new
estimates, and new proofs of existing estimates, of the Euclidean length of
the image under $f$ of a radial segment in the unit disc. Our methods are
based on the hyperbolic geometry of plane domains, and we address some new
questions that follow naturally from this approach.
\end{abstract}

%%%%%%%%%%%%%%%%%%%%%%%%%%%%%%%%%%%%%%%%%%%%%%%%%%%%%%%%%%%
\section{Introduction}\label{Introduction}
%%%%%%%%%%%%%%%%%%%%%%%%%%%%%%%%%%%%%%%%%%%%%%%%%%%%%%%%%%%

\noindent Let $f$ be a function that is analytic in the open unit disc
$\mathbb{D}$ in the complex plane $\mathbb{C}$. We are interested in obtaining
upper bounds of the Euclidean length
\[
{\cal E}(r,\theta) = \int_0^r|f'(te^{i\theta})|\,dt 
\]
of the $f\!$-image of the ray $[0,re^{i\theta}]$, and in understanding the
geometry that lies behind these bounds.  When we wish to emphasize the role of
$f$ in this expression (for example, when we are discussing several functions
at once) we shall use ${\cal E}_f(r,\theta)$ in the natural way.

The Dirichlet space ${\cal D}$ is the space of analytic functions
$f:\mathbb{D}\to\mathbb{C}$ with
\begin{equation}
A(f) = \int\int_\mathbb{D}|f'(z)|^2\,dxdy 
< +\infty.
\end{equation}
The quantity $A(f)$ is the area of the image $f(\mathbb{D})$, counting
multiplicity.  It is easy to see that if $f\in {\cal D}$ then ${\cal
E}(1,\theta)$ is finite for almost all $\theta$. Indeed, by the Cauchy-Schwarz
inequality,
\[
\left(\int_{1/2}^1|f'(te^{i\theta})|\,dt \right)^2 
\ \leqslant\
\int_{1/2}^1|f'(te^{i\theta})|^2\, t\, dt \; 
\int_{1/2}^1\,{dt\over t},
\]
so that
\[
\int_0^{2\pi}\left(\int_{1/2}^1|f'(te^{i\theta})|\,dt \right)^2\,d\theta 
\ \leqslant\  A(f)\,\log 2.
\]
It follows that if $f\in {\cal D}$, then ${\cal E}(1,\theta)$ is finite for
almost all $\theta$. Beurling (\cite{Beurling}, and \cite[p.\,344]{MZ}) has
proved the stronger result that if $f\in {\cal D}$ then ${\cal E}(1,\theta)$
is finite except when $e^{i\theta}$ lies in some subset of
$\partial\mathbb{D}$ of logarithmic capacity zero; thus ${\cal E}(r,\theta)
\to +\infty$ as $r\to 1$ for only a small set of $\theta$. The following
result gives an upper bound on ${\cal E}(r,\theta)$ for all $\theta$, and it
is the starting point of the work in this paper.

\begin{theorem A*}
\label{A}
Suppose that $f$ is in ${\cal D}$. Then, for each $\theta$,
\begin{equation}
{\cal E}(r,\theta) = 
o\left(\left[\log{1\over 1-r}\right]^{1/2}\right)
\end{equation}
as $r\to 1$. The exponent $1/2$ is best possible even for the subclass of
functions that are bounded and univalent in $\mathbb{D}$.
\end{theorem A*}

Theorem A is due to Keogh \cite{Keogh} (who attributes it to J.E.  Littlewood)
and also to Rosenblatt (see \cite[p.\,45]{Jenkins}).  Both Keogh and
Rosenblatt state the result for bounded univalent functions but, as remarked
by Jenkins \cite{Jenkins}, their proof is valid without change for functions
in ${\cal D}$.  Theorem A has been extended in \cite{CT} where the following
two local versions are proved (see \cite{CT}, pp.\,492-493 and Theorem 1).

\begin{theorem B*}\label{B}
Suppose that $f$ is analytic in $\mathbb{D}$, and that for some Stolz region
$S$ at $e^{i\theta}$, $f(S)$ has finite area.  Then {\rm (1.2)} holds for this
$\theta$.
\end{theorem B*}

\begin{theorem C*}\label{C}
There is a constant $A$ such that, if $f$ is analytic and univalent in
$\mathbb{D}$ with $f(0) = 0$, then
\[
{\cal E}(r,\theta) 
\ \leqslant\  
A \left(\log{1\over 1-r}\right)^{1/2}
\sup\{|f(te^{i\theta})| : 0\leqslant t\leqslant r\}
\]
for each $\theta$ and each $r\in (\frac{1}{2},1)$.  Further, if $f$ is bounded
on $[0,e^{i\theta})$ then {\rm (1.2)} holds for this $\theta$.
\end{theorem C*}

Theorem B appears to be stronger than Theorem A, and its proof in \cite{CT} is
substantially harder than a proof of Theorem A. We shall show that Theorems A
and B are equivalent up to a very simple argument in hyperbolic geometry
(which involves no function theory at all). Indeed, our first objective is to
understand Theorems A, B and C from a geometric point of view, and we shall
show that all three results are more transparent when placed in the context of
the hyperbolic geometry of a plane domain.

The class of universal covering maps includes the class of univalent maps, and
we know of no bounds in the literature on ${\cal E}(r,e^{i\theta})$ for
universal covering maps. The following simple example shows what can happen
for covering maps, and also illustrates the use of hyperbolic geometry in this
topic.

\begin{example}
The map $\varphi(z) = (z-i)/(z+i)$ is a conformal map of the upper half-plane
$\mathbb{H}$ onto $\mathbb{D}$, and $F(z) = \exp \big(i\log (-iz)\big)$ is a
universal covering map of $\mathbb{H}$ onto the annulus $A$ given by
$e^{-\pi/2} < |z| < e^{\pi /2}$. Now let $f = F\circ \varphi^{-1}$. Then $f$
is a universal covering map of $\mathbb{D}$ onto $A$, so that the hyperbolic
length of $f([0,r])$ is $\log\,(1+r)/(1-r)$. However, the hyperbolic metric in
$A$ is $\lambda_A(w)\, |dw|$, where
\[
\lambda_A(F(z))|F'(z)| 
\ =\  \lambda_\mathbb{H}(z) 
\ =\  {1\over {\rm Im}(z)},
\]
and a calculation shows that $\lambda_A(w)=1$ when $|w|=1$.  It follows that
the Euclidean length of $f([0,r])$ is the same as its hyperbolic length; thus
\begin{equation}
{\cal E}(r,0)
\ =\ \log {1+r\over 1-r} 
\ =\  \log {1\over 1-r}+\log 2+o(1)
\end{equation}
as $r\to 1$. The striking feature of this example is that we have `lost' the
exponent $1/2$ from the estimate in {\rm (1.2)}.
\end{example}

The exponent $1/2$ in Theorem A is best possible, and the work in this paper
was partly motivated by the desire to understand why this exponent appears in
the estimates for functions in the Dirichlet class but not for universal
covering maps. The explanation of this will be given in terms of hyperbolic
geometry in Section 9. We shall also discuss why, for geometric reasons, other
exponents arise in other circumstances. We regard this as the most interesting
part of the paper, and our geometric treatment of Theorems A, B and C should
be regarded as preparation for this work.

We come now to the idea that underpins most of the work in this paper. As the
hyperbolic metric (and not the Euclidean metric) is the natural metric on
$\mathbb{D}$, when we consider an analytic map $f:\mathbb{D}\to\mathbb{C}$ we
should primarily be concerned with {\it the change of scale from the
hyperbolic metric to the Euclidean metric}, and not with the Euclidean change
of scale $|f'(z)|$. Now the hyperbolic metric $\rho_\mathbb{D}$ on
$\mathbb{D}$ is given by $ds = \lambda_\mathbb{D}(z)|dz|$, where
$\lambda_\mathbb{D}(z) = 2/(1-|z|^2)$. It follows, then, that {\it the primary
role in this discussion should be given to the function}
\begin{equation}
\Lambda(f,z) = {|f'(z)|\over \lambda_\mathbb{D} (z)},
\end{equation}
and that we should consider ${\cal E}(r,\theta)$ to be the integral of this
function with respect to the hyperbolic length on $[0,1)$; that is,
\begin{equation}
{\cal E}(r,\theta) = \int_0^r
{|f'(te^{i\theta})|\over \lambda_\mathbb{D}(te^{i\theta})}\, 
\lambda_\mathbb{D}(te^{i\theta})\,dt.
\end{equation}
Further, as the geodesic segment $[0,re^{i\theta}]$ has hyperbolic length
$\ell(r)$, where
\[
\ell(r) = \rho_\mathbb{D}(0,r) = \log {1+r\over 1-r},
\]
it seems clear that (1.2) should be written as
\[
{\cal E}(r,\theta) = 
o\left(\ell(r)^{1/2}\right).
\]
The use of the apparently simpler term $\log (1-r)^{-1}$ in (1.2) only serves
to conceal the geometry behind these estimates. Of even greater importance is
the fact that the hyperbolic metric is conformally invariant whereas the
logarithmic term in (1.2) is not.  To summarize these ideas, our view is that
\[
{\cal E}(r, \theta) 
\ =\ \int_0^r \Lambda(f,t) \; d\rho_\mathbb{D}(t) 
\ =\  o(\left(\ell(r)^{1/2}\right).
\]

To develop this idea further, the class ${\cal D}$ should now be considered to
be the class of analytic maps $f:\mathbb{D}\to\mathbb{C}$ for which the basic
scaling function $|f'|/\lambda_\mathbb{D}$ is square-integrable over the
hyperbolic plane. Of course, one can also consider the other $L^p\!$-spaces of
functions $f$ for which
\[
\int\int_\mathbb{D} 
\left({|f'(z)|\over \lambda_\mathbb{D}(z)}\right)^p
\lambda_\mathbb{D}(z)^2\,dxdy 
\ <\  +\infty.
\]
These spaces occur in the theory of automorphic functions (they are the
$A^p_q$ spaces, with $q=2$, introduced by Bers), and we shall consider them
briefly in this context in Section 5. To illustrate these ideas, we pause to
show that {\sl if $f/\lambda_\mathbb{D}$ is square-integrable over the
hyperbolic plane (that is, if $f\in {\cal D}$), then
$f:\mathbb{D}\to\mathbb{C}$ is a Lipschitz map with respect to the natural
metrics on $\mathbb{D}$ and $\mathbb{C}$}. This is our next result, and we
give the best Lipschitz constant.

\begin{theorem}
Suppose that $f\in {\cal D}$. Then, for any $z$ in $\mathbb{D}$, 
\begin{equation}
{|f'(z)|\over \lambda_\mathbb{D} (z)} 
\ \leqslant\  
\sqrt{A(f)\over 4\pi}. 
\end{equation}
Further, for each $z$, equality occurs 
when $f(w) = (w-z)/(1-\bar z w)$.
\end{theorem}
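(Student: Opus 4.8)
The plan is to use the conformal invariance that the discussion above has stressed in order to reduce the whole statement to the single point $z=0$. Both quantities in (1.6) transform simply under a M\"obius automorphism $\phi$ of $\mathbb{D}$. The substitution $\zeta=\phi(w)$, whose Jacobian is $|\phi'(w)|^2$, shows at once that $A(f\circ\phi)=A(f)$, so $A$ is invariant; and the invariance of the hyperbolic metric, $\lambda_\mathbb{D}(\phi(w))\,|\phi'(w)|=\lambda_\mathbb{D}(w)$, gives $\Lambda(f\circ\phi,w)=\Lambda(f,\phi(w))$. Choosing $\phi$ with $\phi(0)=z$, it is therefore enough to prove (1.6) at the origin, where (since $\lambda_\mathbb{D}(0)=2$) it reduces to the clean statement $\pi|f'(0)|^2\leqslant A(f)$.

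At the origin I would argue directly from the Taylor expansion of the analytic function $f'$. Writing $f'(w)=\sum_{n\geqslant 0}a_n w^n$, so that $a_0=f'(0)$, integration in polar coordinates together with the orthogonality of the $e^{in\theta}$ yields
\[
A(f)=\int\int_\mathbb{D}|f'(w)|^2\,dxdy
=\pi\sum_{n=0}^{\infty}{|a_n|^2\over n+1}.
\]
If $A(f)=+\infty$ there is nothing to prove, so the term-by-term integration is harmless. Every term on the right is non-negative, so $A(f)\geqslant\pi|a_0|^2=\pi|f'(0)|^2$, which is exactly the bound sought. A series-free variant would instead combine the mean value property $f'(0)=\frac{1}{2\pi}\int_0^{2\pi}f'(re^{i\theta})\,d\theta$ with the Cauchy--Schwarz inequality and then integrate against $2\pi r\,dr$ over $(0,1)$; the series version, however, makes the equality analysis transparent.

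Equality at the origin forces $a_n=0$ for all $n\geqslant 1$, that is, $f'$ constant and $f$ affine. Transporting this back through $\phi$, and taking for $\phi^{-1}$ the automorphism $m_z(\zeta)=(\zeta-z)/(1-\bar z\zeta)$ that sends $z$ to $0$, one finds equality in (1.6) exactly for the maps $f=\alpha m_z+\beta$; the function named in the statement is the case $\alpha=1$, $\beta=0$. This witness can also be checked directly: $m_z$ is a conformal automorphism of $\mathbb{D}$, so $A(m_z)=\pi$, while metric invariance gives $|m_z'(z)|=\lambda_\mathbb{D}(z)/2$ and hence $\Lambda(m_z,z)=\frac12=\sqrt{A(m_z)/(4\pi)}$. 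I expect no real obstacle: the only points requiring a little care are justifying the change of variables and the term-by-term integration, both routine once $A(f)$ is taken to be finite.
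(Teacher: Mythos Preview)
Your proof is correct but proceeds along a genuinely different route from the paper's. The paper works at an arbitrary point $z$ directly, writing $f(z)=\sum_n a_n z^n$ and applying the Cauchy--Schwarz inequality to the series for $f'(z)$ against the identity $\sum_{n\geqslant 1} n|z|^{2n-2}=\lambda_\mathbb{D}(z)^2/4$; together with $A(f)=\pi\sum_n n|a_n|^2$ this gives (1.6) in one stroke. You instead invoke the very conformal invariance the paper has been advertising to reduce to $z=0$, and there replace the Cauchy--Schwarz step by Parseval-type orthogonality of the monomials, reading off $A(f)\geqslant\pi|f'(0)|^2$ from a single term of the series for $A(f)$. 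The paper's argument is slightly more self-contained (no M\"obius reduction) and yields the bound simultaneously at every $z$; your argument, on the other hand, makes the equality analysis completely transparent---you obtain the full family $\alpha m_z+\beta$ of extremals, whereas the paper's Cauchy--Schwarz equality condition is awkward to unwind and the text settles for verifying the particular example by hand.
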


This theorem shows that $|f'(z)|(1-|z|^2) = O(1)$, so $f$ is in the Bloch
class.  The proof, together with much more information about the Bloch class,
is given by Anderson, Clunie and Pommerenke in \cite{ACP}.

\begin{proof}
Suppose that $f(z) = \sum_n a_nz^n$, where $z\in\mathbb{D}$.  The
Cauchy-Schwarz inequality gives
\[
|f'(z)| 
\ \leqslant\  \sum_{n=1}^\infty
\left(\sqrt{n}|a_n|\Big)\Big(\sqrt{n}|z|^{n-1}\right)
\ \leqslant\  
\left(\sum n |a_n|^2\right)^{1/2}
\left(\sum n|z|^{2n-2}\right)^{1/2}.
\]
Since $A(f) = \pi \sum_{n=1}^\infty n|a_n|^2$ and
\[
\lambda_\mathbb{D}(z)^2/4 
\ =\  \left({1\over 1-|z|^2}\right)^2
\ =\  \sum_{n=1}^\infty n|z|^{2n-2}.
\]
This proves (1.6). The statement about equality is easily verified (note that
for these functions, $A(f)=\pi$).
\end{proof}

This simple application of the Cauchy-Schwarz inequality underlies all of the
results in this paper. The argument used in the proof above can be
strengthened slightly to show that
\begin{equation}
{|f'(z)| \over \lambda_\mathbb{D}(z)} \to 0 
\qquad \hbox{ as } \qquad 
z \to \partial \mathbb{D}.
\end{equation}
This will be clear from the proof of Theorem A in Section 3 (and it shows that
any $f$ in ${\cal D}$ is in the little Bloch class).

The plan of the paper is as follows. In Section 2 we briefly outline the main
ideas concerning the hyperbolic metric of a simple connected domain. Section 3
contains our discussion of Theorem A. This contains two examples to show that
the exponent $1/2$ in Theorem A is best possible, and both are based on
hyperbolic geometry. Although one involves a technical geometric estimate
(which is of value in its own right), these examples are completely
transparent. We also give a geometric proof of a result which includes Theorem
A for univalent functions as a special case. In Section 4 we show how Theorem
B follows from Theorem A in an elementary way using only hyperbolic
geometry. This proof is considerably shorter than the published proof of
Theorem B; it is based on a conformally invariant form of Theorem A, and it
applies without change to many other subdomains of $\mathbb{D}$.  Section 5 is
concerned with Theorem C and it contains a geometric proof of this and some
results about $L^p\!$-spaces. The proof of Theorem C in \cite{CT} depends on
an integral inequality due to Marcinkiewicz and Zygmund, and in Section 6 we
show that the published proof of this inequality is a Euclidean version of a
standard argument in hyperbolic geometry. We have already noted that the
exponent $1/2$ in Theorem A is best possible, but Kennedy and Twomey \cite{KT}
have shown how we can obtain more detailed information on the rate of growth
of ${\cal E}(r,\theta)$. See also Balogh and Bonk \cite{BB}.  We complete our
discussion of the known results in Sections 7 and 8 where we discuss their
result, a generalization of it to $L^p\!$-spaces, and some known estimates on
the basic scaling function $|f'(z)|/\lambda_\mathbb{D}(z)|$.  In the remaining
sections of the paper we raise and discuss the analogous issues for universal
cover maps instead of functions in the Dirichlet class. Finally, in the
Appendix, we give the proofs of some of the geometric results that we have
used in the earlier sections.

The authors are grateful for helpful comments by T. Carroll and J.B. Twomey on
an earlier draft of this paper.

%%%%%%%%%%%%%%%%%%%%%%%%%%%%%%%%%%%%%%%%%%%%%%%%%%%%%%%%%%%%%%%
\section{The hyperbolic metric on simply connected domains}
%%%%%%%%%%%%%%%%%%%%%%%%%%%%%%%%%%%%%%%%%%%%%%%%%%%%%%%%%%%%%%%

Each simply connected proper subdomain $D$ of $\mathbb{C}$ supports a
hyperbolic metric $\rho_D$ with density $\lambda_D$, where
\begin{equation}
\lambda_D\big(g(z)\big)|g'(z)| 
= \lambda_\mathbb{D}(z),
\end{equation}
and where $g$ is any conformal map of $\mathbb{D}$ onto $D$.  It is easy to
see that $\lambda_D$ and $\rho_D$ are independent of the choice of $g$, and
that $g$ is an isometry from $(\mathbb{D} ,\rho_\mathbb{D} )$ to $(D,\rho_D)$.
As $D$ is simply connected,
\begin{equation}
{1\over 2\dist [w,\partial D]} 
\ \leqslant\  \lambda_D (w) 
\ \leqslant\  {2\over \dist [w,\partial D]},
\end{equation}
where $\dist [w,\partial D]$ denotes the Euclidean distance from $w$ to the
boundary $\partial D$ of $D$ (these are standard estimates; see \cite{Ahlfors}
or \cite{BP}).

Next, $\lambda_D$ is continuous and positive on $D$. If $D$ is bounded, then
$\lambda_D(z) \to +\infty$ as $z$ approaches $\partial D$, and so $\lambda_D$
has a positive lower bound, say $\lambda_0$, on $D$. Thus if $f$ is bounded
and univalent in $\mathbb{D}$, and $D=f(\mathbb{D})$, then
\begin{equation}
0 
\ \leqslant\  {|f'(z)|\over \lambda_\mathbb{D}(z)} 
\ =\  {1\over \lambda_D\big(f(z)\big)} 
\ \leqslant\  {1\over \lambda_0},
\end{equation}
and 
\begin{equation}
{|f'(re^{i\theta})|\over \lambda_\mathbb{D}(re^{i\theta})} 
\to 0 \qquad \hbox{ as } \qquad r \to 1.
\end{equation}
These conclusions explain some of the earlier results in geometric terms
(albeit in the simpler case of bounded univalent maps). They show, for
example, that if $f$ is bounded and univalent on $\mathbb{D}$ then the map
$f:\mathbb{D}\to\mathbb{C}$ is Lipschitz with respect to the natural metrics
on $\mathbb{D}$ and $\mathbb{C}$.  Theorem 1.2 gives a stronger result than
this; however, this argument goes beyond Theorem 1.2. A domain $D$ is a {\it
Bloch domain} if $D$ does not contains arbitrary large Euclidean discs, and it
is clear from (2.2) that a simply connected domain $D$ is a Bloch domain if
and only if $\lambda_D$ has a positive lower bound on $D$. Thus {\sl if
$f:\mathbb{D}\to D$ is a univalent map of $\mathbb{D}$ onto a Bloch domain
$D$, then $f:\mathbb{D}\to\mathbb{C}$ is Lipschitz with respect to the natural
metrics on $\mathbb{D}$ and $\mathbb{C}$}. Of course, in this case $f$ need
not be in ${\cal D}$.

Finally, we note that (2.4) goes some way to explaining why ${\cal
E}(r,\theta)/\ell(r)^q$ may tend to zero as $r\to 1$.  As ${\cal E}(r,\theta)$
is the integral of $|f'|/\lambda_\mathbb{D}$ over a hyperbolic segment of
length $\ell(r)$, we certainly see that if (2.4) holds, then ${\cal
E}(r,\theta) = o\big(\ell(r)\big)$.  More generally, if the convergence in
(2.4) is sufficiently rapid, we might expect to get some result of the form
${\cal E}(r,\theta) = O\big(\ell(r)^q\big)$ as $r\to 1$.  As
$|f'(re^{i\theta})|$ may tend to $+\infty$ as $r\to 1$, it is perhaps harder
to see why the estimates of ${\cal E}(r,\theta)$ might hold if we take the
Euclidean point of view and integrate $|f'|$ with respect to Euclidean length
on $[0,e^{i\theta})$.

%%%%%%%%%%%%%%%%%%%%%%%%%%%%%%%%%%%%%%%%%%%%%%%%%%%%%%%%%%%%%%%
\section{A discussion of Theorem A}
%%%%%%%%%%%%%%%%%%%%%%%%%%%%%%%%%%%%%%%%%%%%%%%%%%%%%%%%%%%%%%%

Keogh's proof of Theorem A in \cite{Keogh} is short and elementary, and for
completeness we include it here. After this we give two examples to show that
the exponent $1/2$ is best possible, and we end with a geometric proof of a
similar result.
\medskip

\begin{proof}[of Theorem A]
It suffices to prove the result for ${\cal E}(r,0)$, and we write $f(z) =
\sum_n a_nz^n$. Then, for each positive integer $N$,
\begin{eqnarray*}
{\cal E}(r,0) &~=~& \int_0^r |f'(t)|\,dt\\
&~\leqslant~& \int_0^r \left(\sum_{n=1}^\infty n|a_n|t^{n-1}\right)\, dt\\
&~\leqslant~& \sum_{n=1}^\infty |a_n|r^n\\
&~\leqslant~& \sum_{n=1}^{N-1}|a_n| + \sum_{n=N}^\infty |a_n|r^n\cr
&~\leqslant~& \sum_{n=1}^{N-1}|a_n| + 
\left(\sum_{n=N}^\infty n|a_n|^2\right)^{1/2}
\left(\sum_{n=N}^\infty{r^{2n}\over n}\right)^{1/2}\\
&~\leqslant~& \sum_{n=1}^{N-1}|a_n| + 
\left(\sum_{n=N}^\infty n|a_n|^2\right)^{1/2}
\sqrt{\log {1\over 1-r^2}}.\\
\end{eqnarray*}
As $\sum_n n|a_n|^2$ converges, (1.2) follows from this.
\end{proof}

We continue our discussion of Theorem A by providing the following conformally
invariant version of it. It is essential that we use the conformally invariant
hyperbolic length here rather than the classical bound $\log (1-r)^{-1}$.

\begin{theorem}
Suppose that $\Omega$ is a domain that is conformally equivalent to
$\mathbb{D}$, and that $f$ is analytic in $\Omega$ with $f(\Omega)$ of finite
area $A(f)$. Then, for any hyperbolic geodesic segment $\gamma$ in $\Omega$,
\[
{\cal E}\big(f(\gamma)\big) 
\ \leqslant\  
\sqrt{{A(f)\over \pi}{\cal H}(\gamma)},
\]
where ${\cal E}\big(f(\gamma)\big)$ is the Euclidean length of $f(\gamma)$,
and ${\cal H}(\gamma)$ is the hyperbolic length of $\gamma$.  In particular,
if $\Omega$ has finite area $A$, then $\pi {\cal E}(\gamma)^2 \leqslant A{\cal
H}(\gamma)$.
\end{theorem}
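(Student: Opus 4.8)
The plan is to use the conformal invariance that is already built into the statement in order to reduce everything to the radial case on $\mathbb{D}$, where Keogh's power-series argument (reproduced above in the proof of Theorem~A) can be applied directly. First I would pick a conformal map $g:\mathbb{D}\to\Omega$ and set $F=f\circ g$. The change of variables $w=g(z)$ gives $A(F)=\int\int_{\mathbb{D}}|F'|^2\,dxdy=\int\int_{\Omega}|f'|^2\,dxdy=A(f)$, so $F\in\mathcal{D}$; moreover $\gamma$ pulls back to a geodesic segment $\tilde\gamma=g^{-1}(\gamma)$ of the same hyperbolic length, and the Euclidean length of $F(\tilde\gamma)$ equals that of $f(\gamma)$. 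Composing further with a Möbius automorphism of $\mathbb{D}$ (which leaves $A(F)$ unchanged by the same Jacobian computation, preserves hyperbolic length since it is an isometry, and does not alter the Euclidean image-length) I can carry one endpoint of $\tilde\gamma$ to $0$ and then rotate, so that $\tilde\gamma=[0,r]$ is a radial segment with $\mathcal{H}(\tilde\gamma)=\log\frac{1+r}{1-r}$. It thus suffices to bound $\mathcal{E}\big(F([0,r])\big)$.

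For that I would run the Cauchy--Schwarz estimate used in Keogh's proof, but \emph{without} the cut-off $N$ (which was needed there only to produce an $o(\cdot)$ statement). Writing $F(z)=\sum_n a_n z^n$, we have $\mathcal{E}\big(F([0,r])\big)=\int_0^r|F'(t)|\,dt\le\sum_{n\ge1}|a_n|r^n$, and then
\[
\sum_{n\ge1}|a_n|r^n
\ \le\ \Big(\sum_{n\ge1}n|a_n|^2\Big)^{1/2}\Big(\sum_{n\ge1}\frac{r^{2n}}{n}\Big)^{1/2}
\ =\ \sqrt{\frac{A(F)}{\pi}}\,\sqrt{\log\frac{1}{1-r^2}},
\]
using $A(F)=\pi\sum_n n|a_n|^2$ and $\sum_{n\ge1}r^{2n}/n=\log(1-r^2)^{-1}$. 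This is exactly the area-versus-length mechanism behind Theorem~1.2.

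It then remains only to compare the two length terms. Since $\dfrac{1}{1-r^2}=\dfrac{1+r}{1-r}\cdot\dfrac{1}{(1+r)^2}\le\dfrac{1+r}{1-r}$ for $0\le r<1$, we get $\log\frac{1}{1-r^2}\le\log\frac{1+r}{1-r}=\mathcal{H}(\gamma)$, and hence
\[
\mathcal{E}\big(f(\gamma)\big)=\mathcal{E}\big(F([0,r])\big)\ \le\ \sqrt{\frac{A(f)}{\pi}\,\mathcal{H}(\gamma)},
\]
which is the asserted bound (in fact with the slightly sharper $\log(1-r^2)^{-1}$ in place of $\mathcal{H}(\gamma)$). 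The final clause follows by taking $f$ to be the inclusion $\Omega\hookrightarrow\mathbb{C}$, so that $A(f)=A$ and $f(\gamma)=\gamma$, giving $\pi\,\mathcal{E}(\gamma)^2\le A\,\mathcal{H}(\gamma)$.

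I expect the only real obstacle to be bookkeeping rather than analysis: one must verify that $A$, hyperbolic length, and Euclidean image-length are genuinely invariant under both reductions (the first two by the defining conformal invariance of $\lambda$, the area by the Jacobian change of variables, counting multiplicity), and one must observe that the innocuous discrepancy between Keogh's $\log(1-r^2)^{-1}$ and the geodesic length $\log\frac{1+r}{1-r}$ happens to point in the favourable direction. There is no deep difficulty here; the real content is the recognition that Theorem~A, once phrased conformally invariantly, reduces to this single line of Cauchy--Schwarz.
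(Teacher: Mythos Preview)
Your proposal is correct and follows essentially the same route as the paper: reduce via a conformal map (and M\"obius normalization) to a radial segment $[0,r]$ in $\mathbb{D}$, apply Keogh's Cauchy--Schwarz estimate without the cutoff (equivalently, the paper's ``take $N=1$'' step) to obtain $\sqrt{A(F)/\pi}\,\sqrt{\log(1-r^2)^{-1}}$, and then use $\log(1-r^2)^{-1}\le\log\frac{1+r}{1-r}=\mathcal{H}(\gamma)$. Your explicit handling of the ``in particular'' clause via $f=\mathrm{id}$ is also exactly what the paper intends.
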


\begin{proof*}
This is easy. We can find a conformal map $g$ of $\Omega$ onto $\mathbb{D}$ in
such a way that $g$ maps $\gamma$ onto some real segment $[0,r]$ of
$\mathbb{D}$, and then apply Keogh's result to $F$ defined by $F=f\circ
g^{-1}$.  If we take $N=1$ (and $f=F$) in the proof given above, we obtain
\[
{\cal E}_F(r,0) 
\ \leqslant\  
\sqrt{{A(F)\over \pi}\log{1\over 1-r^2}},
\]
and this gives the stated inequality because $A(f)=A(F)$, ${\cal E}_F(r,0) =
{\cal E}\big(f(\gamma)\big)$ and
\[ 
\singlebox
\log{1\over 1-r^2}
\ \leqslant\  \log{1+r\over 1-r} 
\ =\  \ell(r) 
\ =\ {\cal H}(\gamma).
\esinglebox 
\]
\end{proof*}

The estimate $O(\sqrt{{\cal H}(\gamma)})$ is universally valid over all
geodesic segments $\gamma$.  However, if we take $\gamma$ to be the initial
segment of any geodesic ray starting from a fixed point of $\Omega$ then,
exactly as in Keogh's result, we can show that ${\cal E}\big(f(\gamma)\big) =
o(\sqrt{{\cal H}(\gamma)})$. This will be considered further in Section 4.

Theorem A applies to all functions in the Dirichlet space ${\cal D}$. However
in \cite{Keogh}, Keogh showed that the exponent $1/2$ in Theorem A is best
possible when the result is restricted to the smaller class of functions that
are bounded and univalent in $\mathbb{D}$.  Further examples were given by
Jenkins \cite{Jenkins}, Kennedy \cite{Kennedy}, and Carroll and Twomey
\cite{CT}, to show that not only is the exponent $1/2$ in Theorem A best
possible, but so is the function on the right hand side of (1.2), and we shall
look at these results in Section 7. Keogh's example in \cite{Keogh} is based
on the function $g(z)^{2\alpha -1} + i \sin\big(g(z)^\alpha\big)$, where $g$
is a conformal map of $\mathbb{D}$ onto a half-infinite strip, and $\alpha$ is
a suitable constant, and his argument is technical, and not self-contained. In
our examples (Examples 3.2 and 3.3) we study the analytic properties of a
univalent map $g:\mathbb{D}\to D$ indirectly through the hyperbolic geometry
of $g(\mathbb{D})$.  Example 3.2 is extremely simple, but the function in this
example is unbounded. Example 3.3 is of a bounded univalent function in ${\cal
D}$.

\begin{example}
Suppose that $\beta$ is a constant such that for all $f$ in ${\cal D}$, and
all real $\theta$, we have ${\cal E}(r,\theta) = O(\ell(r)^\beta )$ as $r\to
1$; we shall show that $\beta \geqslant \frac{1}{2}$. Take any positive
$\varepsilon$, and let $D = \{x+iy:x >1,\ |y|<1/x^{1+\varepsilon}\}$.  It is
clear that if $t \geqslant 2$, then the open rectangle
\[
\{x+iy: 1 < x < t+1, \quad |y| < 1/(t+1)^{1+\varepsilon}\}
\]
lies in $D$, and from this we see that
\[
\lambda_D(t) 
\ \leqslant\  {2\over {\rm dist}[t,\partial D]}
\ \leqslant\   2(t+1)^{1+\varepsilon}.
\]
Now let $f$ be the conformal map of $\mathbb{D}$ onto $D$ that maps $[0,1)$
onto $[2,+\infty)$. Clearly, $f\in {\cal D}$.  Next, as $f:\mathbb{D}\to D$ is
a hyperbolic isometry,
\[
\ell(r) \ =\  \int_2^{f(r)} \lambda_D(t)\, dt
\ \leqslant\  2\int_2^{f(r)} (t+1)^{1+\varepsilon}\,dt
\ <\  [f(r)+1]^{2+\varepsilon}.
\] 
Finally, as $f$ is increasing on $(0,1)$ it maps $[0,r]$ onto $[2,f(r)]$ so
that ${\cal E}(r) = f(r)-2$, and hence $\ell(r) \leqslant ({\cal
E}(r)+3)^{2+\varepsilon}$.  As ${\cal E}(r,\theta) = O(\ell(r)^\beta )$, it
follows that $1 \leqslant \beta (2+\varepsilon)$. As $\varepsilon$ is
arbitrary we have $\beta \geqslant \frac{1}{2}$.  The reader will notice that,
in this example, the conclusion follows once we know that ${\cal E}(r,\theta)
= O(\ell(r)^\beta )$ as $r\to 1$ for a single value of $\theta$.
\end{example}

\begin{example}
This example is similar to that given in \cite[p.\,491]{CT} to show that there
is a function $f$ that is univalent, bounded, close-to-convex and in ${\cal
D}$ such that the image curve $f\big([0,1)\big)$ may be of infinite
length. Here we let $f$ be a conformal map of $\mathbb{D}$ onto a particular
bounded simply connected domain $D$ which we shall now construct.

\begin{center}
\epsfig{file=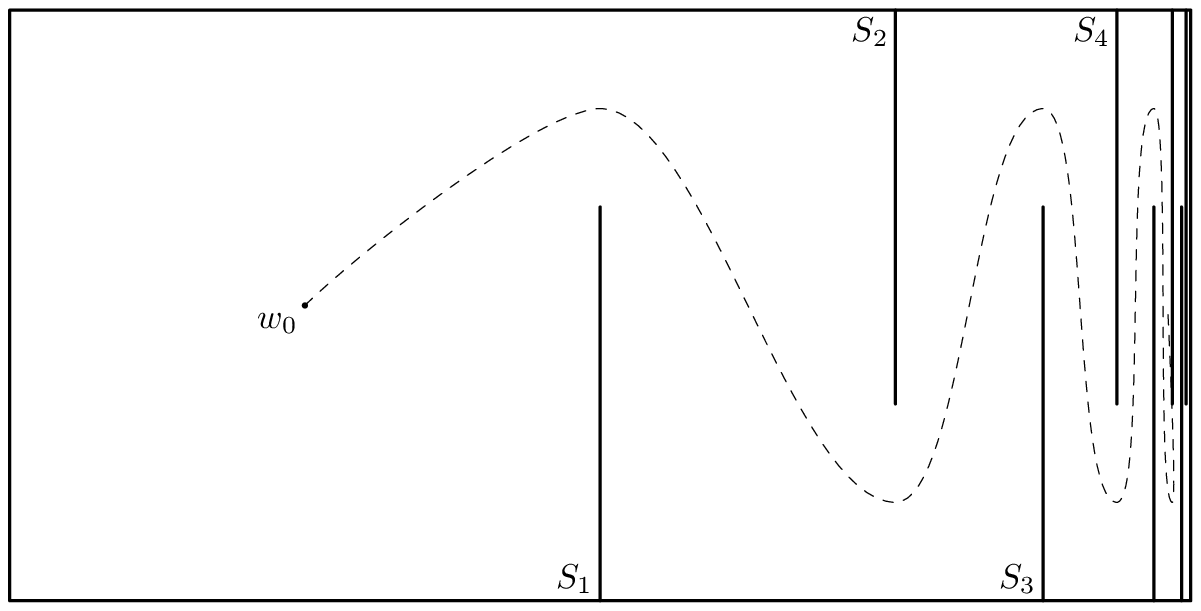}
\end{center}

\centerline{{\bf Figure 1.}\ (The geodesic $f([0, e^{i\theta}))$ is
shown as a dashed line.)} 
\bigskip

Suppose that $\varepsilon >0$ and for $n=1,2,\ldots$, let
\[
a_n = {1\over n^{1+\varepsilon}}, 
\qquad s_n = a_1+\cdots +a_n ,
\qquad s=\sum_{n=1}^\infty a_n.
\]
Now consider the rectangle
\[
R = \{x+iy : 0<x<s, \ -3/2<y<3/2\},
\]
and form a simply connected domain $D$ by deleting from $R$ the vertical slits
\[
S_n = \cases{
\{s_n +iy : -3/2 \leqslant y \leqslant 1/2\} 
&if $n= 1,3,5,\ldots$,\cr
\{s_n +iy : -1/2 \leqslant y \leqslant 3/2\} 
&if $n=2,4,6,\ldots$\ ;}
\]
this is illustrated in Figure 1.  For $n=1,2,\ldots$, let $L_n$ be the segment
of the $y$-axis that joins the slits $S_n$ and $S_{n+1}$; let $w_n$ be
the mid-point of $L_n$ for $n\geqslant 1$; and $w_0 = \frac{1}{2}$.

Now let $f$ be the unique univalent map of $\mathbb{D}$ onto $D$ with
$f(0)=w_1$ and $f'(0)>0$. Then $f \in {\cal D}$, and there is some real
$\theta$ such that the image curve $f\big([0,e^{i\theta})\big)$ accumulates
at, and only at, the right hand edge of $R$ (see \cite[p.\,491]{CT} and
\cite{Pommerenke}). Using this value of $\theta$ we shall construct a sequence
of values $r_n$ with $0=r_0 < r_1<\cdots <r_n<\cdots$,\  $r_n \to 1$,\
$f(r_n)\in L_n$ and
\begin{equation}
{\cal E}(r_n,\theta) \geqslant n, \quad
\ell(r_n) =O(n^{2+\varepsilon}).
\end{equation}

Let us assume for the moment that we have done this. Now suppose that $\beta$
is such that
\begin{equation}
{\cal E}(r,\theta) = O(\ell(r)^\beta)
\qquad \hbox{ as } \qquad r \to 1 
\end{equation} 
for every univalent bounded function in ${\cal D}$ and every
choice of $\theta$; then this holds for the function $f$ constructed above,
and with $r=r_n$.  Now (3.1) and (3.2) imply that $1 \leqslant
(2+\varepsilon)\beta$, and as this is true for every positive $\varepsilon$ we
again conclude that $\beta \geqslant \frac{1}{2}$ as required. It remains only
to construct the sequence $r_n$ satisfying (3.1) for the function $f$ and the
value $\theta$ described above.

We put $r_0=0$ and, for $n \geqslant 1$, we let $r_n$ be the smallest value of
$r$ in $(0,1)$ at which the point $f(re^{i\theta})$ lies on $L_n$.  It is
evident that $0=r_0 < r_1 < r_2 < \cdots < r_n$, that $r_n \to 1$, and that
${\cal E}(r_n,\theta) \geqslant n$; this last inequality is the first
inequality in (3.1).

It remains to establish the estimate for $\ell(r_n)$ in (3.1).  As $f$ is a
conformal map of $\mathbb{D}$ onto $D$, it is a hyperbolic isometry, so that
$\ell(r_n)$ is the hyperbolic length of the curve $f(te^{i\theta})$,
$0\leqslant t \leqslant r_n$.  As this curve is a geodesic with end-points
$w_0$ and $f(r_ne^{i\theta})$, its length is at most the hyperbolic length of
any curve, say $\Gamma$, in $D$ that passes through the points $w_0,
w_1,\ldots ,w_n,f(r_ne^{i\theta})$ in this order.  It suffices, then, to show
that there is a choice of $\Gamma$ whose length is $O(n^{2+\varepsilon})$.  We
take $\Gamma$ to be a polygonal curve constructed as follows. For $k=0,
1,\ldots, n-1$, we join $w_k$ to $w_{k+1}$ by a curve $\Gamma_k$ in $D$
comprising a vertical segment of Euclidean length one , followed by
a horizontal segment (from left to right), and then by another vertical
segment of Euclidean length one. We now let $\Gamma$ be the curve
traced out by $\Gamma_0, \Gamma_1,\ldots ,\Gamma_{n-1}$, followed by the
horizontal segment $\Gamma_n^*$ of $L_n$ that joins $w_n$ to
$f(r_ne^{i\theta})$.  Thus if we denote the hyperbolic length of a curve
$\sigma$ in $D$ by $h_\ell(\sigma$), we have
\[
\ell(r_n) 
\ \leqslant\  h_\ell(\Gamma_0) + h_\ell(\Gamma_1) + \cdots +
h_\ell(\Gamma_{n-1})  + h_\ell(\Gamma_n^*).
\]

We now estimate the terms on the right hand side of this inequality. As each
point of $\Gamma_k$ is at least a distance $d_k$ away from $\partial D$, where
\[
d_k \ =\  \frac{a_{k+2}}{2} \ =\  {1\over 2(k+2)^{1+\varepsilon}},
\]
we see from (2.2) that $h_{\ell}(\Gamma_k) \leqslant
12(k+2)^{1+\varepsilon}$. Thus
\begin{eqnarray*}
\ell(r_n) 
&~\leqslant~& 12\sum_{m=3}^{n+1} m^{1+\varepsilon} + h_\ell(\Gamma_n^*)\\
&~\leqslant~& 12\int_3^{n+2}x^{1+\varepsilon}\,dx + h_\ell(\Gamma_n^*)\\
&~<~& 6(n+2)^{2+\varepsilon} + h_\ell(\Gamma_n^*).\\
\end{eqnarray*}
Our discussion of this example will be complete if we can show that for some
positive number $M$, and all $n = 1,2,\ldots$,
\begin{equation}
h_\ell(\Gamma_n^*) \leqslant M.
\end{equation}
In fact, $h_\ell(\Gamma_n^*) \to 0$ as $n\to\infty$, because if a domain $D$
contains a long thin rectangle whose long sides lie in $\partial D$, then any
hyperbolic geodesic in $D$ that travels the length of the rectangle must pass
very close to the centre of the rectangle. (See \cite{Beardon1} and
\cite{Beardon2}.) We do not need as much as this, and the following result
(whose proof is given in the Appendix) yields (3.3) immediately, and thereby
completes our discussion of Example 3.3.  Note that the hyperbolic metric is
unchanged by (Euclidean) translations and scalings of a domain.  So we may
assume that the rectangle $R$ is centred at the origin and of a fixed width.
\end{example}

\begin{theorem}
There is a positive number $M$ with the following property.  Let $R$ be the
square $(-1, 1)\times (-1, 1)$ and let $D$ be a simply-connected, hyperbolic
domain such that $R \subset D \subset \mathbb{C}$, and the two sides of $R$
parallel to the $y$-axis lie in $\partial D$.  If $\gamma$ is any hyperbolic
geodesic in $D$ that does not have an endpoint on the sides of $R$ parallel to
the $y$-axis, then it can only meet the crosscut $(-1,1)\times \{0\}$ of
$D$ at a hyperbolic distance of at most $M$ from the origin.
\end{theorem}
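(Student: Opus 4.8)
The plan is to combine the monotonicity of the Poincar\'e density under inclusion with a confinement argument for the geodesic, the latter resting on the principle that two distinct complete geodesics of a simply connected domain meet at most once. Write $W_+=\{1\}\times[-1,1]$ and $W_-=\{-1\}\times[-1,1]$ for the two vertical sides, let $0$ denote the origin, and let $p=(x_0,0)$ be a point at which $\gamma$ meets the crosscut $C=(-1,1)\times\{0\}$. First I would reduce the problem: since $R\subset D$, monotonicity of the hyperbolic metric gives $\rho_D(0,p)\le\rho_R(0,p)$, and $\rho_R(0,(x,0))$ is a fixed increasing function of $|x|$ on $[0,1)$. Hence it is enough to produce a single $\delta>0$, independent of $D$ and of $\gamma$, such that no admissible geodesic meets $C$ at a point with $|x_0|>1-\delta$; one may then take $M=\rho_R(0,(1-\delta,0))$. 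By the symmetry $x\mapsto -x$ it suffices to exclude crossings with $x_0>1-\delta$.

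Next I would set up the confinement. Let $\tau_+$ be the complete geodesic of $D$ whose two ideal endpoints are the prime ends at the tips $(1,1)$ and $(1,-1)$ of $W_+$. As $D$ is simply connected, $\tau_+$ separates $D$ into two Jordan subdomains; the two tips divide $\partial D$ into the arc $W_+$ and its complementary arc, and $\tau_+$ together with $W_+$ bounds one of these subdomains, the \emph{pocket} $\Pi_+$, whose only boundary on $\partial D$ is $W_+$. The hypothesis that $\gamma$ has no endpoint on the vertical sides places both ideal endpoints of $\gamma$ in the complementary arc, hence outside $\Pi_+$. Since two distinct complete geodesics cross at most once, a geodesic with both ends on one side of $\tau_+$ cannot cross $\tau_+$; thus $\gamma$ lies in the closed complement of $\Pi_+$ and misses $\Pi_+\cap C$. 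The crosscut $C$ runs from $(-1,0)$ to the wall-midpoint $(1,0)\in W_+$, so the part of $C$ between its last meeting with $\tau_+$ and the wall lies in $\Pi_+$. Writing $x_+$ for the abscissa of that last intersection, I conclude that $\gamma$ can meet $C$ only at points with $x_0\le x_+$, and it remains to bound $x_+$ away from $1$, uniformly in $D$.

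This last point is the main obstacle, and it is where the arbitrariness of $D$ beyond the square enters: $\tau_+$ is a geodesic of $D$ and so depends on the unknown part of $\partial D$ outside $R$, since $D$ may wrap around the tips of $W_+$. The leverage is that everything relevant happens inside $R$, where the density is pinned. For $(x,y)\in R$ the nearest wall is at Euclidean distance $1-|x|$, so $\dist[(x,y),\partial D]\le 1-|x|$, and the left inequality in $(2.2)$ gives the uniform lower bound $\lambda_D(x,y)\ge 1/\big(2(1-|x|)\big)$; together with $\lambda_D\le\lambda_R$ this traps $\lambda_D$ on $R$ between two fixed metrics. In the model half-plane $\{x<1\}$, whose boundary line contains $W_+$, the geodesic joining the two tips is exactly the Euclidean semicircle of radius $1$ centred at $(1,0)$, which meets $C$ at the origin; this is the quantitative heart of the matter and suggests $x_+$ should be bounded by roughly $0$.

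To turn this into a uniform bound for the true $D$ I would argue by compactness. If no $\delta$ worked there would be domains $D_n$ with $x_+^{(n)}\to 1$, and the densities $\lambda_{D_n}$, trapped on $R$ between $1/\big(2(1-|x|)\big)$ and $\lambda_R$, form a normal family there and subconverge locally uniformly to a conformal metric $\lambda_\infty$ on $R$ with the same bounds; the geodesics $\tau_+^{(n)}$ then converge on $R$ to a $\lambda_\infty$-geodesic joining the tips and crossing $C$ at $x=1$. That is impossible, because the lower bound $\lambda_\infty\ge 1/\big(2(1-x)\big)$ makes any curve crossing $C$ near $x=1$ strictly longer than a competitor crossing near the centre, contradicting geodesic minimality in the limiting metric. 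The hard part is therefore not the confinement, which is clean, but making the location of the tip-to-tip geodesic uniform despite the uncontrolled wrap-around of $D$ beyond the walls; the lower bound from $(2.2)$ on the fixed square, fed into a normal-families limit, is what removes this global dependence and yields the required $\delta$, and hence $M$.
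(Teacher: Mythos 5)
Your reduction via monotonicity of $\rho_D$ under inclusion, and your confinement step, are sound and in fact coincide in substance with the paper's argument: the paper also erects a barrier geodesic $\delta$ of $D$ joining two boundary points on the right wall, and observes that a geodesic $\gamma$ with no endpoint on that wall cannot enter the region between $\delta$ and the wall (it phrases this via the Riemann map to $\mathbb{D}$ rather than via ``two complete geodesics meet at most once'', but the content is the same). You have also correctly identified where the real difficulty lies: showing that the barrier's crossing of the crosscut is bounded away from the wall \emph{uniformly} over all admissible $D$.

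The normal-families argument you propose for that last step has a genuine gap. The densities $\lambda_{D_n}$ do subconverge on compacta of $R$, but the curves $\tau_+^{(n)}$ are geodesics of $D_n$: their sub-arcs minimise $\rho_{D_n}$-length among competitors that live in $D_n$ and may leave $R$, where you have no control on the metric. A locally uniform limit of their traces in $R$ is therefore not a geodesic of the limiting metric on $R$ in any sense that lets you run a length comparison confined to $R$. Your closing contradiction also compares lengths of two curves joining the tips, both of which are infinite; the sub-arc comparison you would actually need is between points of $\tau_+^{(n)}$ that can themselves drift to the wall as $n\to\infty$, and then the ``central'' competitor is not shorter. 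Finally, the persistence of the ideal endpoints (the prime ends at the corners $(1,\pm1)$, which must be specified as those accessible from within $R$) under your limit is not addressed. The paper closes exactly this gap with a different, non-compactness tool: a harmonic-measure domination (the Proposition in its Appendix). It inserts the fixed square $E=(0,1)\times(-\frac12,\frac12)$, which shares the arc $\alpha=\{1\}\times[-\frac12,\frac12]$ with $\partial D$, notes that the harmonic measure of $\alpha$ in $D$ strictly dominates that in $E$ throughout $E$, and, since these geodesics are the $\frac12$-level sets of harmonic measure, concludes that the \emph{fixed} geodesic $\eta$ of $E$ separates the barrier $\delta$ from the wall. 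That pins the barrier's crossing of the $x$-axis into $[-k,k]$ for an explicit absolute constant $k$, which is precisely what your compactness step was meant to deliver. If you wish to keep your outline, replacing the normal-families step by such a comparison-domain/harmonic-measure lemma (or some other uniform monotonicity statement for geodesics under domain inclusion) is the missing ingredient.
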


We complete this discussion of Theorem A by returning to the conformally
invariant version of it, namely Theorem 3.1.  This says that if $\gamma$ is a
hyperbolic geodesic segment in a simply connected domain $\Omega$ of finite
area $A$, then $\pi {\cal E}(\gamma)^2 \leqslant A{\cal H}(\gamma)$.  The
following result shows that a similar inequality also holds for Euclidean
geodesic segments; note that in this result {\it the domain need not be
simply-connected}.

\begin{theorem}.
Let $D$ be a domain in $\mathbb{C}$ with finite area, and let $\gamma (t)$,
$t\geqslant 0$, be a Euclidean ray in $D$. Then the Euclidean and hyperbolic
lengths ${\cal E}(t)$ and ${\cal H}(t)$, respectively, of the segment
$[\gamma(0),\gamma(t)]$ satisfy ${\cal E}(t) = o({\cal H}(t)^{1/2})$ as $t\to
+\infty$.
\end{theorem}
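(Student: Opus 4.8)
The plan is to mirror Keogh's proof of Theorem A almost verbatim, with the power series replaced by the geometry of the ray and the role of $\sum_{n\geqslant N}n|a_n|^2$ taken over by the finite area of $D$. After reparametrising by Euclidean arc length I may assume $|\gamma'(s)|\equiv 1$, so that $\mathcal{E}(t)=t$ and $\mathcal{H}(t)=\int_0^t\lambda_D(\gamma(s))\,ds$, and it suffices to establish two facts: that $\mathcal{H}(t)\to+\infty$, and that
\[
J:=\int_0^\infty\frac{ds}{\lambda_D(\gamma(s))}<+\infty .
\]
Granting these, I fix $\varepsilon>0$. For $0<T<t$ the Cauchy--Schwarz inequality, taken with respect to the hyperbolic measure $\lambda_D\,ds$ along the ray exactly as in Keogh's argument, gives
\[
\mathcal{E}(t)=T+\int_T^t 1\,ds\leq T+\Bigl(\int_T^\infty\frac{ds}{\lambda_D(\gamma(s))}\Bigr)^{1/2}\mathcal{H}(t)^{1/2}.
\]
Since $J<+\infty$ the tail integral tends to $0$, so I first choose $T$ so large that the bracketed factor is $<\varepsilon/2$, and then, with $T$ now fixed, use $\mathcal{H}(t)\to+\infty$ to make $T/\mathcal{H}(t)^{1/2}<\varepsilon/2$ for all large $t$. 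This yields $\mathcal{E}(t)/\mathcal{H}(t)^{1/2}<\varepsilon$ eventually, which is the assertion. This is the precise analogue of splitting off the first $N-1$ terms in Keogh's proof.

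The divergence $\mathcal{H}(t)\to+\infty$ is immediate: a straight ray leaves every compact subset of $\mathbb{C}$, hence of $D$, and the hyperbolic metric is complete, so $\mathcal{H}(t)\geq\rho_D(\gamma(0),\gamma(t))\to+\infty$ (note that I use only that the hyperbolic \emph{length} of the segment dominates the hyperbolic \emph{distance}; $\gamma$ is of course not a hyperbolic geodesic). The finiteness of $J$ is where the finite area of $D$ enters, through a tube estimate valid for \emph{any} domain. Writing $\delta(w)=\dist[w,\partial D]$, and letting $n$ be a unit normal to the ray, the map $(s,u)\mapsto\gamma(s)+un$ is an area-preserving injection whose image lies in $D$ whenever $|u|<\delta(\gamma(s))$, since then $\gamma(s)+un\in B(\gamma(s),\delta(\gamma(s)))\subset D$. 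Comparing the area of this tube with that of $D$ gives, with $A$ the area of $D$,
\[
\int_0^\infty\delta(\gamma(s))\,ds\leq\frac12 A<+\infty .
\]
When $D$ is simply connected the left-hand bound in (2.2) says $\lambda_D(w)\geq 1/(2\delta(w))$, so $1/\lambda_D(\gamma(s))\leq 2\delta(\gamma(s))$, whence $J\leq A<+\infty$ and the proof is complete in this case.

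The main obstacle is the general case, where $D$ need not be simply connected and the lower bound in (2.2) fails: near an isolated boundary point $\lambda_D$ behaves like $1/(\delta\log(1/\delta))$, so $1/\lambda_D$ may exceed $\delta$ by a logarithmic factor and the step $J\leq A$ collapses. I expect to recover $J<+\infty$ by replacing (2.2) with a universal Beardon--Pommerenke lower bound for $\lambda_D$ and then arguing that finite area forbids this logarithmic enhancement from persisting along the ray: a genuine factor $\log(1/\delta(\gamma(s)))$ requires a nearly empty portion of $D$ of Euclidean size of order one about $\gamma(s)$, and such portions, being essentially disjoint for widely separated parameter values, would force the area of $D$ to be infinite. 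Quantifying this enhancement in terms of the local area deficit and summing it along the ray — so as to bound the tail of $J$ by the tail of $\int\delta\,ds$ up to a controlled loss — is the step I anticipate will require the most care.
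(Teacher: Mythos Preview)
Your argument in the simply connected case is correct and runs closely parallel to the paper's: both split off an initial segment and apply Cauchy--Schwarz to the tail against hyperbolic arc length. The only difference is the auxiliary function. You work with $\delta(\gamma(s))=\dist[\gamma(s),\partial D]$, combining your tube bound $\int_0^\infty 2\,\delta(\gamma(s))\,ds\leqslant A$ with the Koebe estimate $\lambda_D\geqslant 1/(2\delta)$ from (2.2); the paper works instead with $m(t)$, the one--dimensional measure of the cross--section of $D$ through $\gamma(t)$ perpendicular to $\gamma$, combining $\int m(t)\,dt\leqslant A$ (Fubini) with $\lambda_D(\gamma(t))\geqslant 1/m(t)$.

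The genuine gap is in the multiply connected case --- which is the entire point of the theorem, as the paper emphasises. Your proposed repair via a Beardon--Pommerenke lower bound together with an area--counting argument for the logarithmic defect is only a heuristic: the assertion that the portions of $D$ responsible for large factors $\log(1/\delta)$ are ``essentially disjoint for widely separated parameter values'' would require a quantitative covering lemma that you have not supplied and that is not obvious to formulate. The paper sidesteps the difficulty with one clean idea you are missing. By a symmetrisation inequality of Solynin, Steiner symmetrisation of $D$ about the line carrying $\gamma$ can only decrease the hyperbolic density along $\gamma$: $\lambda_D(\gamma(t))\geqslant\lambda_{\tilde D}(\gamma(t))$. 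The component of $\tilde D$ containing $\gamma$ is \emph{automatically} simply connected, since every transverse slice is a single interval centred on $\gamma$; hence (2.2) legitimately applies to $\tilde D$ and yields $\lambda_D(\gamma(t))\geqslant\lambda_{\tilde D}(\gamma(t))\geqslant 1/m(t)$ for an arbitrary hyperbolic domain $D$. Thus $m(t)$, unlike your $\delta(\gamma(t))$, simultaneously integrates to at most the area of $D$ and controls $\lambda_D$ along $\gamma$ without any connectivity hypothesis. Replacing $\delta$ by $m$ in your argument --- and the tube estimate by Fubini --- is exactly what is needed to close the gap.
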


\begin{proof}
We may assume that $\gamma$ is the positive real axis, and that $\gamma(t)=t$
for $t \geqslant 0$. In \cite{Solynin} Solynin shows that if $D$ is a domain
in $\mathbb{C}$ that meets the real axis, and if $\tilde{D}$ is the domain
obtained from $D$ by symmetrising it about $\mathbb{R}$, then the two
hyperbolic densities satisfy $\lambda_D(x) \geqslant \lambda_{\tilde{D}}(x)$
for $x\in D \cap \mathbb{R}$. We let $m(t)$ be the Euclidean length of the
intersection of $D$ with the vertical line $x=t$; then $\tilde{D}$ is given by
$|y| < \frac{1}{2} m(x)$, and if $t \geqslant 0$ then
\[ 
\lambda_\mathbb{D}(t) 
\ \geqslant\  \lambda_{\tilde{D}}(t)
\ \geqslant\  {1\over 2{\rm dist}[t,\partial\tilde{D}]}
\ \geqslant\  {1 \over 2(\frac{1}{2} m(t))} = {1\over m(t)}.
\]
This implies that the hyperbolic length ${\cal H}(r)$ of $[0,r]$ satisfies
\[ 
{\cal H}(r) \ \geqslant\  \int_0^r {1\over{m(t)}}\,dt.
\]

Next, the Euclidean length ${\cal E}(r)$ of $[0,r]$ is $r$ so, using the
Cauchy-Schwarz inequality, we have
\[
{\cal E}(r)^2
\ =\ \left(\int_0^r 1\,dt\right)^2 
\ \leqslant\  \left(\int_0^r m(t)\, dt\right) 
\left(\int_0^r {1\over{m(t)}}\,dt\right)
\ \leqslant\  \left(\int_0^r m(t)\,dt\right) {\cal H}(r).
\]
More generally, for any positive $c$, and any $r$ with $r>c$, we have
\[
{\cal E}(r) 
\ \leqslant\  c + \int_c^r 1\,dt 
\ \leqslant\  c + \left(\int_c^r m(t)\,dt\right)^{1/2} {\cal H}(r)^{1/2}.
\]
As the integral $\int_{-\infty}^\infty m(t)\,dt$ is the Euclidean area of the
domain $D$, and this is finite, we see that
\[
\int_c^\infty m(t)\,dt \to 0 \qquad \hbox{ as } \qquad c \to +\infty ;
\]
thus ${\cal E}(r) = o({\cal H}(r)^{1/2})$ as required.
\end{proof}

Theorems 3.1 and 3.5 raise the question of characterising those curves
$\gamma$ in a given domain whose Euclidean and hyperbolic lengths satisfy some
inequality of the form ${\cal E}(\gamma) \leqslant M{\cal H}(\gamma)^{1/2}$,
or some variant of this. We can show that this conclusion also holds for
curves that are in some sense sufficiently close to Euclidean rays.  Let
$\gamma:[0,+\infty)\to \mathbb{C}$ be a smooth curve in $\mathbb{C}$
parametrised by Euclidean arc length, and suppose that $\gamma$ is
sufficiently close to an Euclidean ray so that the map $\gamma$ extends to a
quasi-conformal map $g$ that maps a neighbourhood $W$ of $[0,\infty)$ of
finite area onto a neighbourhood $D$ of $\gamma$. Now Theorem 3.5 applies to
the ray $[0,\infty)$ in the domain $W$. However, $g$ does not change Euclidean
length along $[0,\infty)$ and, since $g$ is quasi-conformal, it only alters
hyperbolic length by at most a fixed factor; thus the same conclusion holds
for $\gamma$ in $D$.

We end this section by showing that Theorem 3.5 contains Theorem A in the
special case of conformal maps. Suppose that $f$ is a conformal map of
$\mathbb{D}$ onto a domain $D$ of finite area. We write $f(z) = \sum_n a_n
z^n$, and define $g:\mathbb{D}\to\mathbb{C}$ by $g(z)= \sum_n |a_n| z^n$. Note
that $g$ maps $[0,1)$ into the positive real axis,
\[
A(g) \ =\  \sum_{n=0}^\infty n|a_n|^2 \ =\  A(f) \ <\  +\infty,
\]
and the Euclidean lengths of $f([0,r])$ and $g([0,r])$ satisfy
\[
{\cal E}_f(r,0) 
\ =\  \int_0^r |f'(t)|\,dt 
\ \leqslant\ \int_0^r g'(t)\,dt 
\ =\  {\cal E}_g(r,0).
\]
If ${\cal E}_g(r,0)$ is finite there is nothing to prove.  If not, then
$g(\mathbb{D})$ contains the interval $[|a_0|, \infty)$ and we can apply
Theorem 3.5 to $g$ and obtain
\[
{\cal E}_g(r,0) = o\big(L(r)^{1/2}\big),
\]
where $L(r)$ is the hyperbolic length of $g[0,r]$ in $g(\mathbb{D})$.  The
Schwarz-Pick lemma applied to $fg^{-1} : g(\mathbb{D}) \to f(\mathbb{D})$
implies that $L(r) \leqslant \ell(r)$ so we obtain the conclusion of Theorem A
for $f$, namely that ${\cal E}_f(r,0) = o(\ell(r)^{1/2})$ as $r\to 1$.

%%%%%%%%%%%%%%%%%%%%%%%%%%%%%%%%%%%%%%%%%%%%%%%%%%%%%%%%%%%%%%%
\section{A proof of Theorem B}
%%%%%%%%%%%%%%%%%%%%%%%%%%%%%%%%%%%%%%%%%%%%%%%%%%%%%%%%%%%%%%%

We shall now show how the apparently stronger Theorem B can be deduced
directly from Theorem A by a geometric argument that is more transparent than
the analytic proof of Theorem B given in \cite{CT}.  The idea is to apply
Theorem A to the restriction of $f$ to the Stolz region $S$ contained in
$\mathbb{D}$.  On the radial line $[0, e^{i\theta})$, the hyperbolic metric
$\rho_S$ for $S$ and the hyperbolic metric for $\mathbb{D}$ are comparable, so
Theorem A for $f|_S$ implies Theorem B.  The details of this proof rely on the
hyperbolic geometry of Stolz regions, so we write it out fully.

\begin{theorem}.
Let $f : \mathbb{D} \to \mathbb{C}$ be any analytic function.  For any two
points $z_0, z_1$ in $\mathbb{D}$, let $[z_0, z_1]$ be the hyperbolic geodesic
segment from $z_0$ to $z_1$ and let $ \Omega = \{ z\in \mathbb{D} :
\rho_\mathbb{D}(z, [z_0, z_1]) < d \} $ for a fixed constant $d > 0$.  Then
\[
{\cal E}(f([z_0, z_1])) 
\ \leqslant\  
\left({{A(f(\Omega))}\over{\pi d_E}}\right)^{1/2}
\rho_\mathbb{D}(z_0, z_1)^{1/2}
\]
where $d_E = \tanh \frac{1}{2} d$.
\end{theorem}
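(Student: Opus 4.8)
The plan is to reduce this to Theorem 3.1 (the conformally invariant version of Theorem A) applied to a well-chosen conformal model of the tube $\Omega$. The geodesic segment $[z_0,z_1]$ sits inside its hyperbolic $d$-neighbourhood $\Omega$, and $\Omega$ is a simply connected subdomain of $\mathbb{D}$, hence itself conformally equivalent to $\mathbb{D}$. The key observation is that $[z_0,z_1]$ is \emph{also} a hyperbolic geodesic segment of $\Omega$ (a geodesic that lies in a subdomain and is a geodesic of the larger domain remains a geodesic of the subdomain when the subdomain is convex about it, which the symmetric tube $\Omega$ is). So I can apply Theorem 3.1 to the restriction $f|_\Omega$ and the geodesic $\gamma=[z_0,z_1]$, obtaining
\[
{\cal E}\big(f([z_0,z_1])\big)
\ \leqslant\
\sqrt{\frac{A(f(\Omega))}{\pi}\,{\cal H}_\Omega(\gamma)},
\]
where ${\cal H}_\Omega(\gamma)$ is the hyperbolic length of $\gamma$ measured in $\Omega$. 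The Euclidean length on the left is unchanged since it depends only on $f$ and the set $[z_0,z_1]$, and the area $A(f(\Omega))$ matches the statement.

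The remaining task is purely a comparison of two hyperbolic metrics on the same geodesic, and it is here that the constant $d_E=\tanh\tfrac12 d$ enters. I must bound ${\cal H}_\Omega(\gamma)$ in terms of $\rho_\mathbb{D}(z_0,z_1)={\cal H}_\mathbb{D}(\gamma)$. Since $\Omega\subset\mathbb{D}$, the Schwarz--Pick / domain-monotonicity principle gives $\lambda_\mathbb{D}\leqslant\lambda_\Omega$, so distances in $\Omega$ are \emph{larger}; I need the reverse-direction control, i.e.\ an \emph{upper} bound on $\lambda_\Omega$ along $\gamma$. For any point $w$ on the geodesic, the tube $\Omega$ contains the entire hyperbolic disc of radius $d$ about $w$ (since $w\in\gamma$ and $\Omega$ is the $d$-neighbourhood of $\gamma$). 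A hyperbolic disc of radius $d$ centred at $w$ corresponds, under a Möbius map of $\mathbb{D}$ sending $w$ to $0$, to a Euclidean disc of radius $\tanh\tfrac12 d=d_E$; comparing with the hyperbolic metric of that sub-disc and using $\lambda_{\mathbb{D}(w,d)}(w)=\lambda_\mathbb{D}(w)/d_E$ at the centre, I get the pointwise estimate $\lambda_\Omega(w)\leqslant\lambda_\mathbb{D}(w)/d_E$ for $w$ on $\gamma$.

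Integrating this pointwise inequality along $\gamma$ yields
\[
{\cal H}_\Omega(\gamma)
\ =\ \int_\gamma \lambda_\Omega\,|dw|
\ \leqslant\ \frac{1}{d_E}\int_\gamma \lambda_\mathbb{D}\,|dw|
\ =\ \frac{1}{d_E}\,\rho_\mathbb{D}(z_0,z_1),
\]
and substituting into the displayed consequence of Theorem 3.1 gives exactly the claimed bound, with the factor $d_E=\tanh\tfrac12 d$ appearing under the square root. The main obstacle, and the step deserving the most care, is justifying the pointwise comparison $\lambda_\Omega(w)\leqslant\lambda_\mathbb{D}(w)/d_E$ along the geodesic: I must verify that $\Omega$ genuinely contains a full hyperbolic $d$-disc about each $w\in\gamma$ (true by definition of the $d$-neighbourhood) and then invoke domain monotonicity applied to the \emph{sub}-disc $\mathbb{D}(w,d)\subset\Omega$, which reverses the inequality in the convenient direction since $\mathbb{D}(w,d)$ is smaller. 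Everything else is bookkeeping: the conformal invariance of ${\cal E}$ and $A$, and the fact that $[z_0,z_1]$ remains geodesic in the convex tube $\Omega$.
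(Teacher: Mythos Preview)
Your proposal is correct and follows essentially the same route as the paper: apply Theorem 3.1 to $f|_\Omega$ with $\gamma=[z_0,z_1]$ (which is a geodesic in $\Omega$ by the hyperbolic symmetry of the tube), and then bound ${\cal H}_\Omega(\gamma)$ by $\rho_\mathbb{D}(z_0,z_1)/d_E$ via the pointwise estimate $\lambda_\Omega(w)\leqslant\lambda_\mathbb{D}(w)/d_E$ obtained from the inclusion of the hyperbolic $d$-disc about $w$ in $\Omega$. The only cosmetic differences are the order of the two steps and your phrasing of why $[z_0,z_1]$ remains geodesic in $\Omega$ (the paper invokes symmetry directly rather than ``convexity about it'').
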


\begin{proof*}
First observe that $\Omega$ is a simply connected subdomain of $\mathbb{D}$.
Also, $\Omega$ is hyperbolically symmetric about the geodesic through $z_0$
and $z_1$, so $[z_0, z_1]$ is also a geodesic segment for the hyperbolic
metric $\rho_\Omega$ on $\Omega$.  We wish to compare the two hyperbolic
metrics $\rho_\mathbb{D}(z_0, z_1)$ and $\rho_\Omega(z_0, z_1)$.

Consider the ratio $\lambda_\Omega(z)/\lambda_\mathbb{D}(z)$ for $z\in [z_0,
z_1]$.  We can apply a M\"obius isometry to move $z$ to the origin.  Then
$\Omega$ contains the disc $\{z : |z| < d_E \}$ with hyperbolic radius $d$ and
Euclidean radius $d_E = \tanh \frac{1}{2} d$.  Consequently,
$\lambda_\Omega(0) \leqslant \lambda_{\mathbb{D}(0, d_E)}(0) = 2/d_E$ and so
$\lambda_\Omega(z)/\lambda_\mathbb{D}(z) \leqslant 1/d_E$ (cf. (2.2)).
Integrating this along the curve $[z_0, z_1]$ relative to the hyperbolic arc
length $ds_\mathbb{D}$ in $\mathbb{D}$ shows that
\[
\rho_\Omega(z_0, z_1) 
\ =\  \int_{[z_0, z_1]}
\lambda_\Omega(z)/\lambda_\mathbb{D}(z) \; ds_\mathbb{D}(z) 
\ \leqslant\  {{\rho_\mathbb{D}(z_0, z_1)}\over{d_E}}\ .
\]

Theorem 3.1 is an invariant form of Theorem A, so we can apply it to
the restriction \hbox{$f|_\Omega : \Omega \to \mathbb{C}$}.  This gives
\[
{\cal E}(f([z_0, z_1])) 
\ \leqslant\  \left({{A(f(\Omega))}\over\pi}\right)^{1/2}
\rho_\Omega(z_0, z_1)^{1/2}\ .
\]
Therefore we see that
\[
\singlebox
{\cal E}(f([z_0, z_1])) 
\ \leqslant\  
\left({{A(f(\Omega))}\over{\pi d_E}}\right)^{1/2}
\rho_\mathbb{D}(z_0, z_1)^{1/2}\ .
\esinglebox
\]
\end{proof*}

This Theorem is enough to show that ${\cal E}(r, 0) = O(\rho^{1/2})$
but we want the slightly stronger result ${\cal E}(r, 0) =
o(\rho^{1/2})$.  We could deduce this by applying Theorem A rather
than Theorem 3.1.  However, we give an alternative argument below.

Fix $z_0$ at the origin and let $z_1$ be a point $r\in [0,1)$.  We
will write $\rho$ for $\rho_\mathbb{D}(0, r) = \log (1+r)/(1-r)$.  Let
$\gamma$ be the half-line $[0, 1)$ and let $\Sigma$ be the region 
$\{z\in \mathbb{D} : \rho(z, \gamma) < d \}$.
(This is actually a Stolz region with vertex at $1$.)  
More generally, let $\Sigma(r) = 
\{ z\in \mathbb{D} : \rho_\mathbb{D}(z, [r,1)) < d \}$.  If the area
$A(f(\Sigma))$ is finite, then 
\[
A(f(\Sigma(r))) \searrow 0 \qquad \hbox{ as } \ r\nearrow 1\ .
\]
For any fixed $r_0\in [0,1)$ we can apply Theorem 4.1 to show that
\begin{eqnarray*}
{\cal E}(f([0, r])) 
&~\leqslant~&  {\cal E}(f([0, r_0])) + {\cal E}(f([r_0, r])) \\
&~\leqslant~&  {\cal E}(f([0, r_0])) + 
\left({{A(f(\Sigma(r_o)))}\over{\pi D}}\right)^{1/2} 
\rho_\mathbb{D}(r_0, r)^{1/2} \\
&~\leqslant~&  {\cal E}(f([0, r_0])) + 
\left({{A(f(\Sigma(r_o)))}\over{\pi D}}\right)^{1/2} 
\rho^{1/2} \ .\\
\end{eqnarray*}
This shows that ${\cal E}(f([0, r])) = o(\rho^{1/2})$ and so proves
Theorem B when the Stolz region is $\Sigma$.  It remains to show that
any Stolz region $S$ with vertex at $1$ contains $\Sigma$ provided
that $d$ is chosen small enough.

Let $S$ be a Stolz region with vertex at $1$ and $A(f(S))$ finite.
Since $f$ is bounded on any finite disc $\{z : |z|< R \}$ of radius
$R<1$, we may assume that $S$ contains the entire half-line
$[0,1)$. The fact that $S$ is a Stolz region means that there
is a constant $c>0$ with 
\[
{\rm dist}[r, \partial S] \geqslant c (1-r) 
\qquad \hbox{ for each }\qquad r\in [0,1) .
\]
For each $z$ in the disc $\{z : |z-r| < \frac{1}{2}
c(1-r) \}$ we have $1-|z| \leqslant (1-r) + |z-r| < (1+\frac{1}{2} c) (1-r)$ so
\[
\lambda_\mathbb{D}(z) \ =\  {2\over{1-|z|^2}} 
\ \geqslant\  {1\over{1-|z|}} 
\ >\ {1\over{(1+\frac{1}{2} c)(1-r)}}.
\]
This means that the hyperbolic distance $\rho_\mathbb{D}(r, \partial S)$
from $r$ to the boundary of $S$ satisfies
\[
\rho_\mathbb{D}(r, \partial S) 
\ \geqslant\  {{\frac{1}{2} c(1-r)}\over{(1+\frac{1}{2} c)(1-r)}} 
\ =\  {c\over{c+2}}.
\]
Hence $S$ contains the region $\Sigma = \{z : \rho_\mathbb{D}(z, [0,1))
< d \}$ provided that $d \leqslant c/(c+2)$.  Since $A(f(S))$ is finite, so
is $A(f(\Sigma))$.  We have already proved that, in this case, ${\cal
E}(r,1) = o(\rho^{1/2})$ so the proof of Theorem B is complete.

Note that we also have a version of these inequalities for functions
that are not in the Dirichlet class.  Write 
$A(\rho) = A(f(\mathbb{D}(0, \tanh \frac{1}{2} \rho)))$ for the
area of the image under $f$ of the disc $\mathbb{D}(0, \tanh \frac{1}{2}
\rho)$ with hyperbolic radius $\rho$.  For the geodesic $[0, r]$, the
region $\Omega$ is a subset of the disc $\mathbb{D}(0, \tanh
\frac{1}{2}(\rho+d))$.  Hence Theorem 4.1 gives
\[
{\cal E}(f([0, r])) 
\ \leqslant\  \left({{A(\rho + d)}\over{\pi D}}\right)^{1/2} \rho^{1/2} .
\]

%%%%%%%%%%%%%%%%%%%%%%%%%%%%%%%%%%%%%%%%%%%%%%%%%%%%%%%%%%%%%%%
\section{Theorem C}
%%%%%%%%%%%%%%%%%%%%%%%%%%%%%%%%%%%%%%%%%%%%%%%%%%%%%%%%%%%%%%%

In this section we give a simple geometric argument to show how
Theorem C follows directly from Theorem B and the distortion theorem
for univalent functions. The proof in \cite{CT} of the main inequality in
Theorem C depends crucially on Theorem D, which is stated and proved
geometrically in the next section.  We will first prove the following
slightly modified form of Theorem C. 

\begin{theorem}
There is a constant $C$ such that, if $f$ is analytic and univalent in
$\mathbb{D}$ with $w\notin f(\mathbb{D})$, then 
\[
{\cal E}(r,\theta) 
\ \leqslant\ C
\sup\{|f(te^{i\theta})-w| : 0\leqslant t\leqslant r\}
\left(\log{{1+r}\over{1-r}}\right)^{1/2}
\]
for each $\theta$ and each $r\in [0,1)$.
\end{theorem}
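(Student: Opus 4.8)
The plan is to reduce the claim to Theorem B (equivalently, to Theorem 4.1) by a normalisation and a careful application of the distortion theorem for univalent functions. First I would observe that the statement is invariant under the substitution $g = f - w$, so we may assume $w = 0$ and $0 \notin g(\mathbb{D})$; the quantity $\sup\{|f(te^{i\theta}) - w| : 0 \leqslant t \leqslant r\}$ becomes $M := \sup\{|g(te^{i\theta})| : 0 \leqslant t \leqslant r\}$, which is finite since $g$ is bounded on $[0, re^{i\theta}]$. The essential idea is that, because $g$ omits the value $0$, the image $g(\mathbb{D})$ stays away from the origin in a controlled way; combined with the fact that along the radius the image points have modulus at most $M$, the relevant portion of $g(\mathbb{D})$ is confined to a disc of radius comparable to $M$, so its area is $O(M^2)$. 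Feeding this area bound into the conformally invariant estimate of Theorem 3.1 (or Theorem 4.1) then produces the factor $M$ together with the factor $\bigl(\log\frac{1+r}{1-r}\bigr)^{1/2}$.

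The key steps, in order, are as follows. First I would fix $\theta$ and apply a preliminary rotation so that $\theta = 0$, reducing to estimating ${\cal E}(r,0)$ for the radial segment $[0, r]$. Next I would restrict $g$ to a suitable Stolz-type neighbourhood $\Omega$ of the segment $[0, r]$ of the form used in Theorem 4.1, namely $\Omega = \{z \in \mathbb{D} : \rho_\mathbb{D}(z, [0,r]) < d\}$ for a fixed small $d$. The heart of the argument is to bound $A(g(\Omega))$. Here I would invoke the distortion theorem: because $g$ is univalent and omits $0$, the Koebe-type estimates control how far $g(\Omega)$ can spread relative to the values $g(t)$ for $t$ on the geodesic; since each such $g(t)$ has modulus at most $M$ and $\Omega$ lies within a bounded hyperbolic distance of $[0,r]$, the distortion theorem shows $g(\Omega)$ is contained in a disc about the origin of radius a constant multiple of $M$. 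Hence $A(g(\Omega)) \leqslant C' M^2$ for an absolute constant $C'$. Finally I would substitute this area bound into the conclusion of Theorem 4.1,
\[
{\cal E}(g([0,r])) \leqslant \left(\frac{A(g(\Omega))}{\pi d_E}\right)^{1/2} \rho_\mathbb{D}(0,r)^{1/2},
\]
and use $\rho_\mathbb{D}(0,r) = \log\frac{1+r}{1-r}$ to collect the constants into a single $C$.

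I expect the main obstacle to be the area estimate $A(g(\Omega)) = O(M^2)$. The subtlety is that the supremum $M$ controls $g$ only along the radial segment, not on the whole of $\Omega$, so I must propagate this bound off the radius using univalence. The correct tool is the distortion theorem applied after normalising $g$ near each point $g(t)$: since $\Omega$ is a fixed hyperbolic neighbourhood of the geodesic, the Schwarz–Pick / Koebe distortion estimates give $|g(z)| \leqslant \kappa(d)\,|g(t)|$ for $z \in \Omega$ hyperbolically close to $t$, with $\kappa(d)$ depending only on $d$; this is where omitting the value $0$ is used crucially, to prevent $g$ from being small and then distorting badly. Once the containment of $g(\Omega)$ in a disc of radius $\kappa(d) M$ is established, the area bound and hence the theorem follow immediately. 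A secondary point requiring care is the uniformity of the constant $C$ in $\theta$ and $r$, which is ensured by taking $d$ to be a fixed absolute constant so that $\kappa(d)$ and $d_E = \tanh\frac{1}{2}d$ are absolute constants as well.
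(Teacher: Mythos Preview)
Your proposal is correct and follows essentially the same approach as the paper: restrict to the hyperbolic neighbourhood $\Omega$ of the geodesic segment, use the Schwarz--Pick lemma together with the distortion estimate (the paper's Lemma~5.2, which gives precisely your bound $|g(z)| \leqslant e^{2d}|g(t)|$ from the hypothesis $0\notin g(\mathbb{D})$) to confine $g(\Omega)$ to a disc of radius $e^{2d}M$, and then feed the resulting area bound $A(g(\Omega))\leqslant \pi e^{4d}M^2$ into Theorem~4.1. The only cosmetic difference is that the paper works directly with $f$ and $w$ rather than first translating to $g=f-w$.
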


This certainly implies that ${\cal E}(r, \theta) = o(\rho^{1/2})$ when
$f$ is bounded on the half-line $[0,e^{i\theta})$.

We begin with a simple geometric result.  This is a version of the
distortion theorem for univalent functions.

\begin{lemma}
Let $D$ be a simply connected subdomain of $\mathbb{C}$,
and suppose that $0\notin D$. Then for any two points $w_1$ 
and $w_2$ in $D$ with $|w_1| \leqslant |w_2|$, we have
\[
\rho_D(w_1,w_2) \geqslant 
{\textstyle{1\over 2}}\log {|w_2|\over |w_1|}.
\]
\end{lemma}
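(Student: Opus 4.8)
The plan is to bound from below the hyperbolic length of an arbitrary path from $w_1$ to $w_2$, using only the lower estimate in (2.2) together with the hypothesis $0\notin D$. The key geometric observation is that, since $0$ lies in the closed complement of the open set $D$, the distance from any $w\in D$ to $\partial D$ is no larger than the distance from $w$ to $0$; that is, $\dist[w,\partial D]\leqslant |w|$. Feeding this into the lower bound $\lambda_D(w)\geqslant 1/(2\dist[w,\partial D])$ supplied by (2.2) yields at once the pointwise density estimate $\lambda_D(w)\geqslant 1/(2|w|)$ throughout $D$.

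Next I would estimate the hyperbolic length of any piecewise smooth curve $\sigma$ joining $w_1$ to $w_2$ in $D$. Writing $w=\varrho e^{i\phi}$ in polar coordinates, the Euclidean arc-length element satisfies $|dw|=\sqrt{d\varrho^2+\varrho^2\,d\phi^2}\geqslant |d\varrho|$, so that $|dw|/|w|\geqslant |d\varrho|/\varrho = \bigl|d(\log|w|)\bigr|$. Combining this with the density estimate gives
\[
\int_\sigma \lambda_D(w)\,|dw| \geqslant \frac{1}{2}\int_\sigma \frac{|dw|}{|w|} \geqslant \frac{1}{2}\int_\sigma \bigl|d(\log|w|)\bigr| \geqslant \frac{1}{2}\bigl|\log|w_2|-\log|w_1|\bigr| = \frac{1}{2}\log\frac{|w_2|}{|w_1|},
\]
where the final equality uses $|w_1|\leqslant |w_2|$ and the fact that both moduli are strictly positive because $0\notin D$. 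Taking the infimum over all such curves $\sigma$ then gives the claimed lower bound for $\rho_D(w_1,w_2)$.

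I expect the only point needing care to be the inequality $|dw|/|w|\geqslant \bigl|d(\log|w|)\bigr|$, namely the assertion that the radial part of the arc-length always dominates the variation of $\log|w|$; this is precisely the elementary polar-coordinate computation displayed above, and it is where the hypothesis that $D$ omits the origin is exploited. I would remark that simple connectivity is invoked only to justify the lower half of (2.2); in fact that lower bound is valid for every hyperbolic plane domain, so the estimate holds somewhat more generally. One could alternatively transfer the problem to the image $\log D$ under a single-valued branch of the logarithm, on which $\operatorname{Re}(\log w)=\log|w|$, but the direct density estimate above seems the most transparent route.
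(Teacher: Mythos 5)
Your proof is correct and is essentially the paper's own argument: both derive $\lambda_D(w)\geqslant 1/(2|w|)$ from $\dist[w,\partial D]\leqslant |w|$ together with the lower bound in (2.2), and both then integrate, observing that $|dw|/|w|$ dominates $\bigl|d(\log|w|)\bigr|$ (the paper phrases this step as $\int_\sigma |dz|/|z| \geqslant \bigl|\mathrm{Re}\int_\sigma dz/z\bigr|$ along the geodesic, which is the same inequality).

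One correction to your closing aside: the estimate $\lambda_D(w)\geqslant 1/(2\dist[w,\partial D])$ is \emph{not} valid for every hyperbolic plane domain; it genuinely requires simple connectivity (it comes from the Koebe one-quarter theorem). For the punctured disc $\{0<|z|<1\}$ one has $\lambda(z)=1/\bigl(|z|\log(1/|z|)\bigr)$, which is far smaller than $1/(2|z|)$ near the puncture, and the lemma itself then fails there: $\rho(\varepsilon,\tfrac12)\sim\log\log(1/\varepsilon)$ while $\tfrac12\log(|w_2|/|w_1|)\sim\tfrac12\log(1/\varepsilon)$. So the hypothesis of simple connectivity is doing real work and cannot be dropped.
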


\begin{proof}
As $D$ is simply connected and $0\notin D$, if $z\in D$ then
\[
\lambda_D(z) \ \geqslant\  {1\over 2{\rm dist}[z,\partial D]} 
\ \geqslant\ {1\over 2|z|}.
\]
Now let $\sigma$ be the hyperbolic geodesic in $D$ joining $w_1$ 
to $w_1$. Then
\begin{eqnarray*}
2\rho_D(w_1,w_2) 
&~=~& 2\int_\sigma \lambda_D(z)\, |dz| 
\ \geqslant\ \int_\sigma {|dz|\over |z|}\\
&~\geqslant~& \left|\int_\sigma {dz\over z}\right|
\ \geqslant\  \left|{\rm Re}\left[\int_\sigma {dz\over z}\right]\right|
\ =\ \log {|w_2|\over |w_1|}\\
\end{eqnarray*}
and the proof is complete.
\end{proof}

\begin{proof}[of Theorem 5.1]
Let $\Omega = \{z\in \mathbb{D} : \rho_\mathbb{D}(z, [0,re^{i\theta}]) < d \}$
be the neighbourhood of the geodesic segment $[0, re^{i\theta}]$ as in the
previous section.  If we show that $f(\Omega)$ lies within a disc of radius
$e^{2d} \sup\{|f(te^{i\theta})-w| : 0\leqslant t\leqslant r\}$, then
\[
A(f(\Omega)) \ \leqslant\  \pi e^{4d} 
\left(\sup\{|f(te^{i\theta})-w| : 0\leqslant t\leqslant r\}\right)^2 .
\]
So Theorem 4.1 will show that
\begin{eqnarray*}
{\cal E}(r,\theta) &~\leqslant~& 
\left( {{A(f(\Omega))}\over{\pi D}} \right)^{1/2} \rho^{1/2} \\
&~\leqslant~& 
{{e^{2d}}\over{D^{1/2}}} \sup\{|f(te^{i\theta})-w| : 0\leqslant t\leqslant r\} 
\;\rho^{1/2} \\
\end{eqnarray*}
and complete the proof.

For each point $z\in \Omega$, there is a $\zeta\in [0, re^{i\theta}]$ with
$\rho_\mathbb{D}(z, \zeta) < d$.  The Schwarz-Pick lemma shows that $f :
\mathbb{D} \to f(\mathbb{D})$ decreases hyperbolic lengths.  So
$\rho_{f(\mathbb{D})}(f(z), f(\zeta)) < d$.  Lemma 5.2 now shows that
$|f(z)-w| < |f(\zeta)-w| e^{2d}$.
Consequently,
\[
|f(z)-w| \ <\  e^{2d} \sup \{|f(te^{i\theta})-w| : t\in [0, r]\} .
\]
This means that $f(\Omega)$ lies within a disc centred on $w$ and with radius
$e^{2d} \sup \{|f(te^{i\theta}) - w| : 0\leqslant t\leqslant r\}$, as
required.
\end{proof}

Theorem 5.1 is certainly strong enough to show that ${\cal E}(r,
\theta) = o(\rho^{1/2})$ when $f$ is bounded on the half-line
$[0,e^{i\theta})$.   However, for completeness, we will show that Theorem 5.1
implies Theorem C.  Suppose that $f : \mathbb{D} \to \mathbb{C}$ is
analytic and univalent with $f(0) = 0$. 
Choose $w$ to be a point of $\mathbb{C}\setminus f(\mathbb{D})$ with minimal
modulus, and let $\zeta = f(\frac{1}{2} e^{i\theta})/w$.  We will
show that $|\zeta| \geqslant \frac{1}{2}$.  We may suppose that $\zeta
\in \mathbb{D}$ as, otherwise, this is obviously true.  Then, as $z\mapsto wz$
is a hyperbolic contraction of $\mathbb{D}$ into $f(\mathbb{D})$, we have 
\[
\rho_\mathbb{D}(0, \zeta) 
\ \geqslant\ \rho_{f(\mathbb{D})}(0, w\zeta) 
\ =\ \rho_{f(\mathbb{D})}(f(0), f(\frac{1}{2} e^{i\theta}))
\ =\ \rho_\mathbb{D}(0, \frac{1}{2} e^{i\theta}) 
\]
so that $|\zeta| \geqslant {\textstyle 1\over 2}$ in this case too.

Since $|f(\frac{1}{2} e^{i\theta})| \geqslant \frac{1}{2}|w|$, we now have 
\begin{eqnarray*}
\sup \{ |f(te^{i\theta})-w| : 0\leqslant t\leqslant r \} 
&~\leqslant~&
\sup \{ |f(te^{i\theta})| : 0\leqslant t\leqslant r \} \ +\ |w|  \\
&~\leqslant~&
\sup \{ |f(te^{i\theta})| : 0\leqslant t\leqslant r \} \ +\ 2|f(\frac{1}{2}
e^{i\theta})| .\\
\end{eqnarray*}
Provided that $r\geqslant \frac{1}{2}$, the right side is certainly bounded by 
$3 \sup \{ |f(te^{i\theta})| : 0\leqslant t\leqslant r \}$ and so Theorem C
follows from 5.1.

We conclude this discussion of Theorem C with the following 
variant of it. First, we introduce the spaces $\Lambda^p$ 
of measurable functions on $[0,1)$ whose $p\!$-th power 
is integrable with respect to the hyperbolic length
$\lambda_\mathbb{D}(t)\,dt$.

\begin{theorem}
In the notation above, if 
$f'(re^{i\theta})/\lambda_\mathbb{D}(r) \in \Lambda^p$ then
\begin{equation}
{\cal E}(r,\theta) = o\big(\ell(r)^{1/q}\big)
\end{equation}
as $r\to 1$, where $1/p+1/q=1$.
In particular, if $f'/\lambda_\mathbb{D}$ is square-integrable
with respect to hyperbolic length along the ray ending
at $e^{i\theta}$, then 
${\cal E}(r,\theta) = o(\sqrt{\ell (r)})$ as $r\to 1$.
\end{theorem}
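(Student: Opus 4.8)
The plan is to read the left-hand side through the paper's own lens, writing, as in (1.5),
\[
{\cal E}(r,\theta) = \int_0^r \frac{|f'(te^{i\theta})|}{\lambda_\mathbb{D}(t)}\,\lambda_\mathbb{D}(t)\,dt = \int_0^r \phi(t)\,d\mu(t),
\]
where $\phi(t) = |f'(te^{i\theta})|/\lambda_\mathbb{D}(t)$ is the basic scaling function $\Lambda(f,\cdot)$ restricted to the radius and $d\mu(t) = \lambda_\mathbb{D}(t)\,dt$ is the hyperbolic length measure on $[0,1)$. With this notation the hypothesis $f'(re^{i\theta})/\lambda_\mathbb{D}(r)\in\Lambda^p$ says precisely that $\int_0^1 \phi^p\,d\mu<+\infty$, while the hyperbolic length of $[0,r]$ is $\int_0^r 1\,d\mu = \rho_\mathbb{D}(0,r) = \ell(r)$. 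The natural tool is then H\"older's inequality with exponents $p$ and $q$, applied to the pairing of $\phi$ with the constant function $1$ against $\mu$; this gives
\[
{\cal E}(r,\theta) \leqslant \left(\int_0^r \phi^p\,d\mu\right)^{1/p}\left(\int_0^r 1\,d\mu\right)^{1/q} \leqslant \left(\int_0^1 \phi^p\,d\mu\right)^{1/p}\ell(r)^{1/q},
\]
which already yields the crude bound ${\cal E}(r,\theta) = O(\ell(r)^{1/q})$. This is the exact analogue of taking $N=1$ in Keogh's proof of Theorem A and of the first Cauchy--Schwarz step in the proof of Theorem 3.5, with the exponent $1/q$ replacing $1/2$ because the integrability is now in $\Lambda^p$ rather than $\Lambda^2$.

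The real work is upgrading $O$ to $o$, and for this I would use the splitting device that recurs throughout the paper (Keogh's truncation at $N$, the $r_0$-splitting in Section~4, the $c$-splitting in Theorem 3.5). For fixed $r_0\in(0,1)$ write
\[
{\cal E}(r,\theta) = {\cal E}(r_0,\theta) + \int_{r_0}^r \phi\,d\mu,
\]
where ${\cal E}(r_0,\theta)$ is a finite constant since $f'$ is continuous, hence bounded, on the compact segment $[0,r_0 e^{i\theta}]$. Applying H\"older to the tail over $[r_0,r]$ and using $\int_{r_0}^r 1\,d\mu = \rho_\mathbb{D}(r_0,r)\leqslant \ell(r)$ gives
\[
\int_{r_0}^r \phi\,d\mu \leqslant \left(\int_{r_0}^1 \phi^p\,d\mu\right)^{1/p}\ell(r)^{1/q}.
\]
Dividing by $\ell(r)^{1/q}$ and letting $r\to 1$, so that $\ell(r)\to\infty$ and the constant term disappears, we obtain
\[
\limsup_{r\to 1}\frac{{\cal E}(r,\theta)}{\ell(r)^{1/q}} \leqslant \left(\int_{r_0}^1 \phi^p\,d\mu\right)^{1/p}.
\]
Since $\phi\in\Lambda^p$, the tail $\int_{r_0}^1 \phi^p\,d\mu\to 0$ as $r_0\to 1$, so letting $r_0\to 1$ forces the $\limsup$ to be zero. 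This is exactly ${\cal E}(r,\theta) = o(\ell(r)^{1/q})$, and the in-particular clause is the special case $p=q=2$.

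I do not expect a serious obstacle here: the whole argument is H\"older's inequality followed by the standard tail-vanishing trick, and the only genuine point is the two-stage limit (first $r\to 1$ for fixed $r_0$, then $r_0\to 1$) that converts the finiteness of the $\Lambda^p$-norm into a vanishing tail. The only routine checks are that ${\cal E}(r_0,\theta)$ is finite and that $\int_{r_0}^r 1\,d\mu\leqslant\ell(r)$, both of which are immediate; the conceptual content is entirely in recognising ${\cal E}(r,\theta)$ as the integral of $\Lambda(f,\cdot)$ against hyperbolic length, after which the inequality with the correct exponent $1/q$ is forced by H\"older's inequality.
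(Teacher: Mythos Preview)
Your proposal is correct and follows essentially the same approach as the paper: both apply H\"older's inequality to the representation ${\cal E}(r,\theta)=\int_0^r (|f'|/\lambda_\mathbb{D})\,\lambda_\mathbb{D}\,dt$ to obtain the $O(\ell(r)^{1/q})$ bound, and then upgrade to $o$ by a tail-splitting argument. The paper merely remarks that the $O\to o$ conversion ``is carried out as in the proof of Theorem A'', whereas you spell out the $r_0$-splitting explicitly; your version is thus slightly more self-contained but otherwise identical in substance.
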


\begin{proof}
We shall only consider the radius $(0,1)$ but a similar 
argument will hold for any radius. Suppose that $f$ 
is analytic in $\mathbb{D}$, and take any $p$ and $q$ in 
$[1,+\infty]$ with $1/p +1/q =1$. H\"older's inequality 
gives
\begin{eqnarray*}
{\cal E}(r,0)
&~=~& \int_0^r \left({|f'(t)|\over \lambda_\mathbb{D} (t)^{1/q}}\right)
\lambda_\mathbb{D} (t)^{1/q}\, dt\\
&~\leqslant~& 
\left(\int_0^r \left({|f'(t)|\over \lambda_\mathbb{D} (t)}\right)^p 
\lambda_\mathbb{D} (t)\,dt\right)^{1/p}\ell(r)^{1/q}\\
\end{eqnarray*}
so that if the function 
$r \mapsto f'(re^{i\theta})/\lambda_\mathbb{D}(r)$ is in 
$\Lambda^p$ then (5.1) holds with a $O$-estimate instead of 
a $o$-estimate. The conversion from the $O$-term to the 
$o$-term is carried out as in the proof of Theorem A in 
Section 3.
\end{proof}

%%%%%%%%%%%%%%%%%%%%%%%%%%%%%%%%%%%%%%%%%%%%%%%%%%%%%%%%%%%%%%%
\section{An integral inequality}
%%%%%%%%%%%%%%%%%%%%%%%%%%%%%%%%%%%%%%%%%%%%%%%%%%%%%%%%%%%%%%%

The proof of Theorem C given in \cite{CT} depends on the following 
inequality due to Marcinkiewicz and Zygmund 
(Theorem 2, \cite[p.\,479]{MZ}).

\begin{theorem D*}
For each Stolz region $S$ in $\mathbb{D}$ at the point 
$e^{i\theta}$ there is a number $\beta(S)$ such that if 
$f$ is analytic in $\mathbb{D}$ then
\begin{equation}
\int_0^1 (1-r)|f'(re^{i\theta})|^2\,dr
\ \leqslant\  \beta(S) {\rm area}\big(f(S)\big).
\end{equation}
\end{theorem D*}

There is no discussion in \cite{MZ} of 
why one might expect the factor $1-r$ to appear in the 
integrand in (6.1), but a close examination of the proof there 
shows that it is an Euclidean version of what is a completely 
standard geometric construction in hyperbolic geometry. For 
this reason one should not be surprised at the appearance 
of such a factor. Consistent with the general point of view 
expressed in this article, we suggest that Theorem D (and 
its proof) should be rewritten in the following form.

\begin{theorem D*}
For each Stolz region $S$ in $\mathbb{D}$ at the point 
$e^{i\theta}$ there is a number $\beta(S)$ such that if $f$ 
is analytic in $\mathbb{D}$ then
\[
\int_0^1 \left({|f'(re^{i\theta})|\over 
\lambda_\mathbb{D}(re^{i\theta})}\right)^2\,
\lambda_\mathbb{D}(re^{i\theta})\,dr
\ \leqslant\ \beta(S)\, {\rm area}\Big(f(S)\Big).
\]
\end{theorem D*}

Note that this implies that, when $f(S)$ has finite area, then
$|f'(z)|/\lambda_\mathbb{D}(z)$ is square integrable with respect to hyperbolic
length on the hyperbolic ray $[0,1)$. Our proof of Theorem D is essentially
that given in \cite{MZ} but with the considerable benefits gained by using
hyperbolic geometry.

First, we clarify what we mean by a Stolz region. It is 
customary to consider a Stolz region at the point $1$ 
to be a Jordan domain whose closure lies in $\mathbb{D}\cup\{1\}$,
and whose boundary contains two Euclidean straight line 
segments which have a common endpoint at $1$. This is a 
Euclidean Stolz region and it is not conformally invariant. 
For most purposes it is better to use a hyperbolic Stolz 
region which we now describe. Let $\gamma_0$ be any geodesic 
ray in $\mathbb{D}$ ending at, say $\zeta$, and let $\gamma$ be
the entire geodesic that contains $\gamma_0$. For each positive
number $d$ the {\it lens region} $L(\gamma,d)$ is the set of 
points in $\mathbb{D}$ whose hyperbolic distance from $\gamma$ is 
strictly less than $d$; this is clearly the union of all 
open discs of radius $d$ whose centres lie in $\gamma$. 
A {\it hyperbolic Stolz region} at $\zeta$ is one 'end' of 
a lens region; for example, the union of all open discs of 
radius $d$ whose centres lie in the half-ray $\gamma_0$. 
If $\gamma$ is the real diameter $(-1,1)$ of $\mathbb{D}$, 
then $L(\gamma,d)$ is bounded by two arcs of circles, each 
passing through $\pm 1$, and it follows from this that in
almost every case in complex analysis one can use either a
Euclidean or a hyperbolic Stolz region. The advantage of
hyperbolic Stolz regions (and lens regions) is that they 
are defined in any domain that supports a hyperbolic metric, 
and they are conformally invariant. 

We are now ready to give our version of the proof of Theorem 
D and in this we shall use a hyperbolic Stolz region. 
We begin by stating two standard results in hyperbolic 
geometry (which, for completeness, we prove in the Appendix).
The first result is the hyperbolic version of the well known 
fact that if $f$ is analytic in a domain containing a closed 
Euclidean disc $\Delta$ with centre $z_0$, then $f(z_0)$ is 
the average value of $f$ over the disc $\Delta$. It is perhaps 
not so well known that if $f$ is analytic in $\mathbb{D}$,
the same is true when $\Delta$ is a closed hyperbolic disc.
The second result is a type of Harnack inequality 
for the hyperbolic density in $\mathbb{D}$.

\begin{lemma}
Suppose that $f$ is analytic in $\mathbb{D}$, and that $\Delta$ 
is a closed hyperbolic disc in $\mathbb{D}$ with hyperbolic centre 
$z_0$. Then $f(z_0)$ is the non-Euclidean average of $f$
taken over $\Delta$.
\end{lemma}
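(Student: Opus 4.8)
The plan is to reduce to the case where the hyperbolic centre $z_0$ is the origin, and then exploit the rotational symmetry of the hyperbolic area element about $0$. Throughout I write $dA = \lambda_\mathbb{D}(z)^2\,dx\,dy$ for the hyperbolic area element, and by the \emph{non-Euclidean average} of $f$ over $\Delta$ I mean the quotient of $\int\!\!\int_\Delta f\,dA$ by $\int\!\!\int_\Delta dA$.

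First I would record that $dA$ is invariant under every M\"obius isometry $g$ of $\mathbb{D}$. This is immediate from the defining relation (2.1), which for $D=\mathbb{D}$ reads $\lambda_\mathbb{D}(g(z))|g'(z)| = \lambda_\mathbb{D}(z)$, together with the fact that the real Jacobian of $g$ is $|g'(z)|^2$; multiplying the two gives $\lambda_\mathbb{D}(g(z))^2|g'(z)|^2 = \lambda_\mathbb{D}(z)^2$. Now choose the M\"obius isometry $\phi$ with $\phi(z_0)=0$ and set $g = f\circ\phi^{-1}$, which is again analytic on $\mathbb{D}$. Since $\phi$ is a hyperbolic isometry it carries $\Delta$ onto the hyperbolic disc $\Delta' = \phi(\Delta)$ of the same hyperbolic radius centred at $0$, and the change of variables $w=\phi(z)$ together with the invariance just noted gives
\[
\int\!\!\int_\Delta f(z)\,dA \ =\ \int\!\!\int_{\Delta'} g(w)\,dA,
\qquad
\int\!\!\int_\Delta dA \ =\ \int\!\!\int_{\Delta'} dA.
\]
As $g(0) = f(\phi^{-1}(0)) = f(z_0)$, it suffices to prove the lemma in the case $z_0 = 0$.

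When $z_0 = 0$ the hyperbolic disc $\Delta$ of hyperbolic radius $d$ is precisely the Euclidean disc $\{\,|z| < \tanh\frac{1}{2}d\,\}$, because the hyperbolic distance from the origin depends only on $|z|$. Since $\overline{\Delta}\subset\mathbb{D}$ and $f$ is analytic, I may write $f(z) = \sum_{n\geqslant 0} a_n z^n$ with the series converging uniformly on the compact set $\overline{\Delta}$, so term-by-term integration against the finite measure $dA$ is legitimate. The density $\lambda_\mathbb{D}(z)^2$ depends only on $|z|$, so in polar coordinates each term with $n\geqslant 1$ carries the angular factor $\int_0^{2\pi} e^{in\theta}\,d\theta = 0$ and hence vanishes. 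Only the constant term survives, and
\[
\int\!\!\int_\Delta f(z)\,dA \ =\ a_0 \int\!\!\int_\Delta dA \ =\ f(0)\int\!\!\int_\Delta dA;
\]
dividing by $\int\!\!\int_\Delta dA$ shows that $f(0)$ is the non-Euclidean average of $f$ over $\Delta$, as required.

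I do not expect any serious obstacle. The only two points needing care are the invariance of the hyperbolic area measure under $\phi$, which is what makes the reduction to $z_0=0$ legitimate and the whole notion of ``non-Euclidean average'' conformally natural, and the observation that a hyperbolic disc centred at $0$ is a Euclidean disc centred at $0$, which is exactly what forces the angular integrals to vanish. Convergence causes no difficulty, since $f$ is analytic on a neighbourhood of the compact set $\overline{\Delta}$.
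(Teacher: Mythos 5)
Your proof is correct and follows essentially the same route as the paper's: reduce to $z_0=0$ via a M\"obius isometry and the invariance of the hyperbolic area element, then exploit the radial symmetry of $\lambda_\mathbb{D}^2$ on the Euclidean disc $\Delta$. The only cosmetic difference is that the paper invokes the circle mean value property $f(0)=\frac{1}{2\pi}\int_0^{2\pi}f(re^{i\theta})\,d\theta$ directly where you expand $f$ in a power series and kill the angular integrals term by term, which amounts to the same computation.
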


\begin{lemma}
For all $z$ and $w$ in $\mathbb{D}$, 
\[
{1\over 4\exp \rho_\mathbb{D}(z,w)} 
\ \leqslant\ {\lambda_\mathbb{D} (z)\over \lambda_\mathbb{D} (w)}
\ \leqslant\ 4\exp \rho_\mathbb{D}(z,w).
\] 
\end{lemma}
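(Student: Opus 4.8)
The plan is to prove only the upper bound $\lambda_\mathbb{D}(z)/\lambda_\mathbb{D}(w) \leqslant 4\exp\rho_\mathbb{D}(z,w)$; the lower bound then follows immediately by interchanging $z$ and $w$, since $\rho_\mathbb{D}$ is symmetric. The most transparent route, and the one in keeping with the rest of the paper, is to treat $\log\lambda_\mathbb{D}$ as a function on the hyperbolic plane and estimate its rate of change along a geodesic. Writing $u(z)=\log\lambda_\mathbb{D}(z)=\log 2-\log(1-|z|^2)$, I would compute the Euclidean gradient, $|\nabla u(z)|=2|z|/(1-|z|^2)=|z|\,\lambda_\mathbb{D}(z)$, and then divide by $\lambda_\mathbb{D}$ to pass to the norm of $du$ measured in the hyperbolic metric; this gives $\|du\|_{\rho_\mathbb{D}}=|z|<1$ at every point. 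Integrating $du$ along the hyperbolic geodesic from $w$ to $z$, parametrised by hyperbolic arc length, then yields $|u(z)-u(w)|\leqslant\rho_\mathbb{D}(z,w)$, that is, the sharper inequality $\lambda_\mathbb{D}(z)/\lambda_\mathbb{D}(w)\leqslant\exp\rho_\mathbb{D}(z,w)$, from which the stated bound follows with room to spare. In this approach the only real point to get right is the conversion from the Euclidean gradient to the hyperbolic norm (the division by $\lambda_\mathbb{D}$) and the resulting observation that this norm is exactly $|z|$, hence uniformly below $1$.

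Should one prefer to avoid any differential geometry and produce the constant $4$ directly, I would instead argue purely algebraically from the explicit formula $\lambda_\mathbb{D}(z)=2/(1-|z|^2)$, so that $\lambda_\mathbb{D}(z)/\lambda_\mathbb{D}(w)=(1-|w|^2)/(1-|z|^2)$. Let $p=|(z-w)/(1-\bar w z)|$ be the pseudo-hyperbolic distance, so that $\exp\rho_\mathbb{D}(z,w)=(1+p)/(1-p)$, and recall the standard identity $1-p^2=(1-|z|^2)(1-|w|^2)/|1-\bar w z|^2$. Using this identity to eliminate $1-|w|^2$, the desired inequality reduces after cancellation to $(1-p)|1-\bar w z|\leqslant 2(1-|z|^2)$. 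Since $(1-p)|1-\bar w z|=|1-\bar w z|-|z-w|$, rationalising the left side via $|1-\bar w z|^2-|z-w|^2=(1-|z|^2)(1-|w|^2)$ turns this into the clean requirement $1-|w|^2\leqslant 2\big(|1-\bar w z|+|z-w|\big)$.

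This last inequality is where the factor $4$ is finally accounted for, and it is the one step I would single out as needing a small idea rather than bookkeeping. I would finish it by noting $1-|w|^2=(1-|w|)(1+|w|)\leqslant 2(1-|w|)$ together with $|1-\bar w z|\geqslant 1-|w||z|\geqslant 1-|w|$, so that $1-|w|^2\leqslant 2|1-\bar w z|\leqslant 2\big(|1-\bar w z|+|z-w|\big)$, as required. Both arguments are short; the main thing to be careful about is the direction of each inequality through the reductions, and the reminder that, by the symmetry of $\rho_\mathbb{D}$, establishing the single upper bound already suffices.
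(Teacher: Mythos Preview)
Both of your arguments are correct. Your first route in fact proves the sharper inequality $\lambda_\mathbb{D}(z)/\lambda_\mathbb{D}(w)\leqslant\exp\rho_\mathbb{D}(z,w)$, and your second algebraic reduction via the pseudo-hyperbolic distance is clean and recovers the factor $4$ exactly as stated.

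The paper's proof is different from either of yours. It first reduces to the radial case via the inequality $\rho_\mathbb{D}(z,w)\geqslant\rho_\mathbb{D}(|z|,|w|)$ (equivalent to $|1-\bar w z|\geqslant 1-|w|\,|z|$), then writes $\rho_\mathbb{D}(|z|,|w|)=\rho_\mathbb{D}(0,|w|)-\rho_\mathbb{D}(0,|z|)$ for $|z|\leqslant|w|$ and computes directly, the factor $4$ appearing as the bound on $\bigl((1+|w|)/(1+|z|)\bigr)^{-2}$. Your gradient approach is more conceptual and yields the optimal Lipschitz constant $1$ for $\log\lambda_\mathbb{D}$, which the paper's method does not; the paper's approach, on the other hand, is entirely elementary and avoids any calculus beyond the explicit formula for $\rho_\mathbb{D}(0,r)$. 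Your second approach and the paper's are comparable in spirit---both are short manipulations of the explicit formulae---but organise the algebra around different identities.
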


\begin{proof}[of Theorem D]
We may assume that $e^{i\theta}=1$. Let $S$ be 
the hyperbolic Stolz region consisting of points whose 
hyperbolic distance from $(0,1)$ is strictly less than $d$.
For any point $x$ on the diameter $(-1,1)$, 
let $Q(x)$ be the open hyperbolic disc with centre $x$ and 
radius $d$, and let the hyperbolic area of $Q(x)$ (which is 
independent of $x$) be $A(d)$. Then, using Lemma 6.1 followed
by the Cauchy-Schwarz inequality, we have
\begin{eqnarray*}
|f'(x)| &\leqslant & {1\over A(d)}\int\int_{Q(x)}|f'(w)|
\lambda_\mathbb{D} (w)^2\, dudv\\
&~\leqslant~& {1\over A(d)}
\sqrt{\int\int_{Q(x)}|f'(w)|^2\lambda_\mathbb{D} (w)^2\, dudv}
\sqrt{\int\int_{Q(x)}\lambda_\mathbb{D} (w)^2\, dudv}\\
&~=~&
{1\over \sqrt{A(d)}}
\sqrt{\int\int_{Q(x)}|f'(w)|^2\lambda_\mathbb{D} (w)^2\, dudv}.\\
\end{eqnarray*}
We deduce that
\begin{eqnarray}
{|f'(x)|^2\over \lambda_\mathbb{D} (x)^2}
&~\leqslant~& {1\over A(d)}
\int\int_{Q(x)}|f'(w)|^2
\left({\lambda_\mathbb{D} (w)\over \lambda_\mathbb{D} (x)}\right)^2
\, dudv\nonumber \\
&~\leqslant~& {4e^d\over A(d)} \int\int_{Q(x)}|f'(w)|^2\, dudv,
\end{eqnarray}
the last step being by Lemma 6.2.

Now define a sequence $x_n$ by $x_0=0$ and 
$x_0<x_1< \cdots < 1$, where $\rho(x_n, x_{n+1}) = d$. 
Next, let $y_n$ be the point in
$[x_n,x_{n+1}]$ that maximises 
$\big(|f'(x)|/\lambda_\mathbb{D} (x)\big)^2$. 
Then, using (6.2) with $x=y_n$ we obtain
\begin{eqnarray*}
\int_0^1 
\left({|f'(t)|\over \lambda_\mathbb{D} (t)}\right)^2\, 
\lambda_\mathbb{D}(t)dt
&~=~& 
\sum_{n=0}^\infty \int_{x_n}^{x_{n+1}} 
\left({|f'(t)|\over \lambda_\mathbb{D} (t)}\right)^2
\, \lambda_\mathbb{D}(t)dt\\
&~\leqslant~& 
d\sum_{n=0}^\infty \left({|f'(y_n)|\over 
\lambda_\mathbb{D} (y_n)}\right)^2 \\
&~\leqslant~&
{4de^d\over A(d)} \sum_{n=0}^\infty 
\int\int_{Q(y_n)}|f'(w)|^2\, dudv.\\
\end{eqnarray*}
Now each $Q(y_n)$ has radius $d$, and as
\[
\rho(y_n,y_{n+k}) \ \geqslant\  \rho(x_{n+1},x_{n+k}) \ \geqslant\ (k-1)d,
\]
we see that any point in $S$ can lie in at most five of 
the $Q(y_n)$. This implies that
\begin{eqnarray*}
\sum_{n=0}^\infty \int\int_{Q(y_n)}|f'(w)|^2\, dudv
&~\leqslant~&
5\int\int_{\cup_n Q(y_n)}|f'(w)|^2\, dudv\\
&~\leqslant~& 
5\int\int_S|f'(w)|^2\, dudv
\ \leqslant\ 
5\,{\rm area}(f(S)),\\
\end{eqnarray*}
and this completes our proof of Theorem D. Notice that this 
proof also gives an explicit estimate for the size of 
$\beta (S)$, namely
\[
\beta(S) \ \leqslant\  {20de^d\over A(d)} \ =\ 
{5de^d\over \pi\sinh^2 (\frac{1}{2} d)} \ \sim\ 20d/\pi
\]
as $d\to +\infty$.
\end{proof}

In order to apply Theorem D we need 
to know when the image of a Stolz region has finite area, 
and for this we quote the following result 
(Theorem 4 \cite[p.\,480]{MZ}). 

\begin{theorem E*}
Let $f$ be analytic in $\mathbb{D}$, and let $E$ be the set 
of points on $\partial\mathbb{D}$ where $f$ has an angular 
limit (from within a Stolz region). Then, for almost all 
$\zeta$ in $E$, and any Stolz region $S$ at $\zeta$, 
we have ${\rm area}\big(f(S)\big) < +\infty$.
\end{theorem E*}

%%%%%%%%%%%%%%%%%%%%%%%%%%%%%%%%%%%%%%%%%%%%%%%%%%%%%%%%%%%%%%%
\section{Refinements of Theorem A}
%%%%%%%%%%%%%%%%%%%%%%%%%%%%%%%%%%%%%%%%%%%%%%%%%%%%%%%%%%%%%%%

Although the exponent $1/2$ in Theorem A is the best possible, 
we can obtain more detailed information on the rate of growth
of ${\cal E}(r,\theta)$ by using integrals. Kennedy and Twomey 
\cite[Theorem 1]{KT} showed that if $A(f)$ is finite then 
\[ 
\int_0^1 {{{\cal E}(r,\theta)^2}\over
{(1-r)[\log 1/(1-r)]^2}}\,dr \ \leqslant\ {A(f)\over\pi} 
\]
for all $\theta$. As usual, it is sufficient to consider the 
case $\theta = 0$ and we write ${\cal E}(r)$ for ${\cal E}(r,0)$.
As $\ell(r) = \log (1+r)/(1-r)$ this inequality can be rewritten 
as 
\[
\int_0^1 {{{\cal E}(r)^2}\over{\ell(r)^2}}\ell'(r)\,dr
\ \leqslant\  C A(f),
\]
or better still as
\begin{equation}
\int_0^\infty {{{\cal E}(r)^2}\over{\ell(r)^2}}\,d\ell(r) 
\ \leqslant\ C A(f),
\end{equation}
for some constant $C$. We shall now state and prove an 
analogous result for $L^p\!$-spaces.

Let $f:\mathbb{D} \to \mathbb{C}$ be analytic and recall that
$|f'(z)|/\lambda_\mathbb{D}(z)$ is the factor by which $f$ 
changes scale from the hyperbolic metric at $z\in \mathbb{D}$ 
to the Euclidean metric in $\mathbb{C}$. The hyperbolic area 
measure on $\mathbb{D}$ is $\lambda_\mathbb{D}(z)^2\,dx\,dy$.
Let $\Lambda^p(\mathbb{D})$ be the set of analytic functions 
$f:\mathbb{D}\to \mathbb{C}$ for which
\[
||f'/\lambda_\mathbb{D}||_p \ =\ \left(\int_\mathbb{D} 
\left({|f'(z)|\over \lambda_\mathbb{D}(z)}\right)^p
\lambda_\mathbb{D}(z)^2\,dx\,dy\right)^{1/p}
\]
is finite. 
We will assume that $1<p<\infty$ and write $q$ for the conjugate 
index so that $1/p + 1/q = 1$. Note that, for $p=2$, we have 
\[
\Big(||f'/\lambda_\mathbb{D}||_2\Big)^2
\ =\ \int\!\!\int_\mathbb{D}|f'(z)|^2\,dx\,dy \ =\ A(f);
\]
thus $\Lambda^2(\mathbb{D})$ consists of those analytic functions 
with finite area $A(f)$ and so is the Dirichlet class
${\cal D}$. We now generalize (7.1).

\begin{theorem}
For a function $f$ in $\Lambda^p(\mathbb{D})$,
\[
\left(\int_0^\infty 
\left({{\cal E}(r,\theta)\over{\ell(r)}}\right)^p
\,d\ell(r)\right)^{1/p} 
\ \leqslant\ {{2q}\over{\pi^{1/p}}} ||f'/\lambda_\mathbb{D}||_p.
\]
Consequently,
\[
\int_0^1 {{{\cal E}(r,\theta)^p}\over
{(1-r^2)\left(\log{{1+r}\over{1-r}}\right)^p}}\,dr
\ <\ \infty.
\]
\end{theorem}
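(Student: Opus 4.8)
The plan is to exploit the fact that, once we pass to the hyperbolic arc length parameter along the ray, ${\cal E}(r,\theta)$ is literally a Hardy average of the basic scaling function. Throughout I fix $\theta=0$; the general case is identical after a rotation. Write $s=\ell(r)$, so that $ds=\lambda_\mathbb{D}(r)\,dr$ and $r$ ranges over $(0,1)$ exactly as $s$ ranges over $(0,\infty)$. Setting
\[
\phi(s)={|f'(r)|\over \lambda_\mathbb{D}(r)}\qquad (s=\ell(r)),
\]
the substitution $u=\ell(t)$ inside the defining integral gives ${\cal E}(r,0)=\int_0^{s}\phi(u)\,du$. Hence the left-hand side of the asserted inequality is precisely
\[
\left(\int_0^\infty\left({1\over s}\int_0^s\phi(u)\,du\right)^p\,ds\right)^{1/p}.
\]

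First I would apply the classical Hardy inequality in the form $\int_0^\infty\big(s^{-1}\int_0^s\phi\big)^p\,ds\leqslant q^p\int_0^\infty\phi(s)^p\,ds$, whose constant $q=p/(p-1)$ is sharp. This accounts for the factor $q$ in the theorem and reduces everything to the one-dimensional (radial) estimate
\[
\int_0^\infty\phi(s)^p\,ds=\int_0^1\left({|f'(t)|\over\lambda_\mathbb{D}(t)}\right)^p\lambda_\mathbb{D}(t)\,dt\ \leqslant\ {2^p\over\pi}\,\big(||f'/\lambda_\mathbb{D}||_p\big)^p.
\]

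The second step is a trace inequality bounding the integral over the ray by the integral over all of $\mathbb{D}$, and it is here that the remaining constant $2/\pi^{1/p}$ is produced. Since $f'$ is analytic, $|f'|^p$ is subharmonic and so obeys a sub-mean-value inequality over Euclidean discs; equivalently one may run the hyperbolic mean-value argument of Theorem D, using Lemma 6.1 together with the Harnack estimate of Lemma 6.2 and Jensen's inequality for $t\mapsto|t|^p$. Bounding $(|f'(t)|/\lambda_\mathbb{D}(t))^p$ by its average over a disc centred at $t$, integrating along the ray with respect to hyperbolic length, and then interchanging the order of integration by Fubini, each point $w$ of $\mathbb{D}$ is charged only by those ray-points whose averaging disc contains it; controlling this overlap weight converts the radial integral into $\int_\mathbb{D}(|f'|/\lambda_\mathbb{D})^p\lambda_\mathbb{D}^2\,dx\,dy$, which is $(||f'/\lambda_\mathbb{D}||_p)^p$.

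The main obstacle is pinning the constant in this trace step down to exactly $2^p/\pi$. The Euclidean sub-mean-value property supplies the factor $1/\pi$ cleanly, while the factor $2^p$ reflects the normalisation $\lambda_\mathbb{D}(z)=2/(1-|z|^2)$ near $\partial\mathbb{D}$; getting it sharp forces a judicious choice of the averaging radius $\delta(t)$ comparable to $2^{-p}(1-t^2)$ and a careful estimate of the Fubini weight, rather than the lossy two-sided bound (2.2) (the latter route, with the constant $4$ of Lemma 6.2, is too wasteful to recover the clean constant). By contrast the Hardy step is routine. Finally, the displayed ``Consequently'' is immediate: restricting the first inequality to finite $r$ and writing $d\ell(r)=\ell'(r)\,dr=\frac{2}{1-r^2}\,dr$ turns the finiteness of the left-hand side into
\[
\int_0^1 {{\cal E}(r,\theta)^p\over (1-r^2)\left(\log{1+r\over 1-r}\right)^p}\,dr<\infty ,
\]
the factor $2$ being harmless for the finiteness statement.
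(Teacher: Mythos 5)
Your overall architecture is exactly the paper's: pass to the hyperbolic arc-length parameter, recognise ${\cal E}/\ell$ as a Hardy average of $\phi=|f'|/\lambda_\mathbb{D}$, apply Hardy's inequality with constant $q$, and reduce everything to the one-dimensional estimate $\int_0^\infty\phi(s)^p\,ds\leqslant\frac{2^p}{\pi}\,\|f'/\lambda_\mathbb{D}\|_p^p$. The Hardy step is fine (the paper proves it from scratch by integration by parts plus H\"older, which is what the classical proof is anyway), and your derivation of the ``Consequently'' statement is correct. The problem is that the radial estimate with the constant $2^p/\pi$ --- which is the entire analytic content of the theorem beyond Theorem D --- is asserted but not proved. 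You identify it yourself as ``the main obstacle'' and then offer only a programme: sub-mean-value averaging over discs, Fubini, a ``judicious choice'' of radius $\delta(t)\sim 2^{-p}(1-t^2)$, and a ``careful estimate of the Fubini weight.'' That route is the $L^p$ version of the Section 6 proof of Theorem D, and it does yield $\int_0^\infty\phi^p\,d\ell\leqslant C_p\|f'/\lambda_\mathbb{D}\|_p^p$ for \emph{some} constant (so the finiteness conclusion survives), but every step of it --- the disc must stay inside $\mathbb{D}$, Lemma 6.2 costs a factor $4e^d$, the overlap count costs another factor --- introduces slack, and nothing in your sketch shows these losses can be arranged to multiply out to exactly $2^p/\pi$. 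As written, the stated inequality is not established.

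The paper's mechanism for this step is different and worth noting: it represents $f'(r)$ by Poisson's formula on the circle $|z|=s$ with $s=\sqrt{r}$, applies Jensen's inequality to the Poisson average (a probability measure, so no constant is lost there), bounds the kernel by $\frac{s+|z|}{s-|z|}$, and then computes the resulting double integral explicitly after the change of variable $s=\sqrt{r}$; the factor $2^p/\pi$ falls out of the elementary estimate $2(1+s^2)^{p-1}(1+s)^2\leqslant 2^{p+2}$ together with the normalisation $\lambda_\mathbb{D}=2/(1-|z|^2)$. If you want to complete your proof you should either carry out this Poisson-kernel computation or be content with an unspecified constant in place of $2q/\pi^{1/p}$, in which case you should say so explicitly and note that the ``Consequently'' clause is all that survives.
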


\begin{proof}
Poisson's formula shows that if $|z|<s<1$, then
\[
f'(z) \ =\ \int_0^{2\pi} f'(se^{i\theta})\, 
{{s^2 - |z|^2}\over{|z-se^{i\theta}|^2}}\,
{{d\theta}\over{2\pi}}.
\]
As $p>1$ we may apply Jensen's inequality to this and obtain
\[
|f'(z)|^p \ \leqslant\  
\int_0^{2\pi} |f'(se^{i\theta})|^p\; 
\left({{s^2 - |z|^2}\over{|z-se^{i\theta}|^2}}\,
{{d\theta}\over{2\pi}}\right)\\
\ \leqslant\ 
\left({{s+|z|}\over{s-|z|}}\right)
\int_0^{2\pi} |f'(se^{i\theta})|^p\,{{d\theta}\over{2\pi}}.
\]

Now let $z=r$ and $s = \sqrt{r}$; thus $0\leqslant r\leqslant s <1$. Then
\begin{eqnarray*}
\int_0^1 {{|f'(r)|^p}\over{\ell'(r)^p}}\,\ell'(r)\, dr 
&~\leqslant~&
\int_0^1 \left(\int_0^{2\pi} |f'(se^{i\theta})|^p \,
{{d\theta}\over{2\pi}}\right)\;
\left({{1-r^2}\over{2}}\right)^{p-1} 
\left({{s+r}\over{s-r}}\right)\, dr \\
&~=~& 
\int_0^1 \int_0^{2\pi} 
\left({|f'(se^{i\theta})|\over \lambda_\mathbb{D}(se^{i\theta})}
\right)^p \;
\left({{2}\over{1-s^2}}\right)^p \;
\left({{1-r^2}\over{2}}\right)^{p-1} \left({{s+r}\over{s-r}}\right)
\; {{1}\over{2\pi}}\, d\theta\, dr \\
&~=~& 
\int_0^1 \int_0^{2\pi} 
\left({|f'(se^{i\theta})|\over \lambda_\mathbb{D}(se^{i\theta})}
\right)^p \;
{{2(1+s^2)^{p-1} (1+s)^2}\over{\pi (1-s^2)^2}} \;
s\, d\theta\, ds \\
&~\leqslant~& 
\int_0^1 \int_0^{2\pi} 
\left({|f'(se^{i\theta})|\over \lambda_\mathbb{D}(se^{i\theta})}
\right)^p \;
{2^{p+2}\over\pi (1-s^2)^2}\,s\, d\theta\, ds \\ 
&~=~& 
{{2^p}\over{\pi}}\int\int_\mathbb{D} 
\left({|f'(z)|\over \lambda_\mathbb{D}(z)}
\right)^p \; \lambda_\mathbb{D}(z)^2\, dx\, dy.\\
\end{eqnarray*}
Therefore,
\begin{equation}
\int_0^\infty \left({{d{\cal E}}\over{d\ell}}\right)^p\,d\ell \ =\ 
\int_0^1 \left({{{\cal E}'(r)}\over{\ell'(r)}}\right)^p\,\ell'(r)\,dr
\ \leqslant\ {{2^p}\over{\pi}} 
(||f'/\lambda_\mathbb{D}||_p)^p 
\end{equation}

Now consider the integral 
\[
\int_0^L \left({{\cal E}(r)\over{\ell}(r)}\right)^p\,d\ell(r).
\]
Integration by parts gives
\begin{eqnarray*}
\int_0^L \left({{\cal E}\over{\ell}}\right)^p\,d\ell 
&~=~&
-{1\over{p-1}} \left.{{{\cal E}^p}\over{\ell^{p-1}}}\right|_0^L +
\int_0^L \left({{p}\over{p-1}}\right)\,
{{{\cal E}^{p-1}}\over{\ell^{p-1}}}\;
{{d{\cal E}}\over{d\ell}}\,d\ell \\
&~=~& 
-{1\over{p-1}} {{{\cal E}(L)^p}\over{L^{p-1}}}\; +\,
q \int_0^L \left({{{\cal E}}\over{\ell}}\right)^{p-1}\,
{{d{\cal E}}\over{d\ell}}\,d\ell \\
\end{eqnarray*}
So H\"older's inequality gives
\[
\left(\int_0^L \left({{\cal E}\over{\ell}}\right)^p \,
d\ell\right)^{1/p}
\ \leqslant\ q\left(\int_0^L \left({{d{\cal E}}\over{d\ell}}\right)^p \;
d\ell\right)^{1/p} 
\]
This, together with inequality (7.2), shows that
\[
\int_0^\infty \left({{\cal E}\over{\ell}}\right)^p\,d\ell
\ \leqslant\  {{2^p q^p}\over{\pi}} ||\sigma||_p^p 
\]
as required. This completes the proof of Theorem 7.1.
\end{proof}

The inequalities in Theorem 7.1 can be shown to be optimal 
by using the idea in Example 3.2, and we give a brief informal 
discussion of this. First, the following argument shows that
the integral inequality of Kennedy and Twomey is stronger than 
Theorem A. When the integral 
\[
\int_0^\infty \left({{\cal E}\over{\ell}}\right)^p\,d\ell
\]
converges we see that, for any $\delta > 0$, there is an $L$ with
\[ 
\int_L^\infty \left({{\cal E}\over{\ell}}\right)^p\,d\ell
\ \leqslant\ \delta.
\]
The distance ${\cal E}$ is an increasing function of $\ell$, so 
\[ 
{\cal E}(L)^p \int_L^\infty {1\over{\ell^p}}\,d\ell \ =\  
{{{\cal E}(L)^p}\over{(p-1)L^{p-1}}} \ \leq\ 
\int_L^\infty \left({{\cal E}\over{\ell}}\right)^p\,d\ell
\ \leqslant\  \delta 
\]
and hence ${\cal E} = o(\ell^{1/q})$.

Next, we consider the inequalities in Theorem 7.1.
Consider a continuous function $a:[0,\infty) \to (0,1]$ 
and the corresponding simply connected domain
\[
D(a) \ =\  \mathbb{D}\cup \{x+iy\in \mathbb{C}\ :\ x>0,\ |y|<a(x)\}.
\]
There is a conformal map $f:\mathbb{D} \to D(a)$ from 
$\mathbb{D}$ onto $D(a)$ with $f(0)=0$ and $f'(0) > 0$. 
This $f$ maps the radius $[0,1)$ 
onto the positive real axis in $D(a)$. Provided that $a$ varies 
slowly, say $|a'(x)| \leqslant 1$, it is easy to estimate the 
hyperbolic density $\lambda_{D(a)}(z)$ at any point $z$. 
If $t>0$, the domain $D(a)$ lies between the 
square $\{x+iy:|y| < a(t)-|x-t|\}$
and the slit plane $\mathbb{C} \setminus \{t+iy:|y| \geqslant a(t)\}$,
so the density lies between the densities for these two domains.
Hence, $\lambda_{D(a)}(x+iy) \sim 1/[a(x)-y)]$. This is
enough to show that $\sigma \in \Lambda^p(\mathbb{D})$ if and 
only if $a\in L^{p-1}[0,\infty)$.

In this situation, 
\[
\ell\ =\ \int_0^{\cal E} \lambda_{D(a)}(t)\,dt 
\qquad\hbox{ so }\qquad
{{d{\cal E}}\over{d\ell}} \ =\  {1\over{\lambda_{D(a)}}} \sim a.
\]
Thus the integral in (7.2) is
\[
\int_0^\infty \left({{d{\cal E}}\over{d\ell}}\right)^p\,d\ell 
\ \sim\  \int_0^\infty a(t)^p\,\lambda_{D(a)}(t)\,dt 
\ \sim\  
\int_0^\infty a(t)^{p-1}\,dt.
\]
It is now clear that (7.2) is optimal.

For the inequality in Theorem 7.1 we need to do a little more. 
The same argument gives
\[
\int_0^\infty \left({{\cal E}\over \ell}\right)^p\,d\ell 
\ \sim\ 
\int_0^\infty \left( {1\over{{\cal E}}}\int_0^{\cal E}
{1\over{a(t)}}\;dt \right)^{-p}\,dt\ . 
\]
Provided we look at functions $a$ that decay very slowly (so they are
essentially constant over longer and longer intervals as $t\to
\infty$), we can ensure that 
\[
{1\over{{\cal E}}}\int_0^{\cal E} {1\over{a(t)}}\;dt 
\ \sim\ 
{1\over{a({\cal E})}}.
\]
This shows that the inequality in Theorem 7.1 is optimal.

%%%%%%%%%%%%%%%%%%%%%%%%%%%%%%%%%%%%%%%%%%%%%%%%%%%%%%%%%%%%%%%
\section{Estimates on the change of scale}
%%%%%%%%%%%%%%%%%%%%%%%%%%%%%%%%%%%%%%%%%%%%%%%%%%%%%%%%%%%%%%%

Finally, we remark that inequalities between $|f'(z)|$ and 
$\lambda_\mathbb{D} (z)$ were studied extensively in \cite{SW} and 
although the results in \cite{SW} are not expressed in hyperbolic 
terms, we shall nevertheless write them in this form here.
There is an example \cite[p.\,151]{SW} which shows that
there exists a bounded analytic $f$ in $\mathbb{D}$, and a
sequence $z_n$ in $\mathbb{D}$, such that $|z_n|\to 1$ and
\begin{equation}
\liminf_{n\to\infty}\,{|f'(z_n)|\over\lambda_\mathbb{D} (z_n)} 
\ >\  0,
\end{equation}
and there is a proof (on p.146) that (8.1)
is best possible in the following sense. Let $Q(r)$ be 
defined and positive for $0<r<1$, and suppose that $Q(r)\to 0$ 
as $r\to 1$. Then there exists a function $f$, analytic and 
univalent in $\mathbb{D}$, and continuous in $\overline{\mathbb{D} }$, and 
a sequence $x_n$ satisfying $0<x_1<x_2< \cdots <x_n \to 1$, 
such that as $n\to \infty$,
\[
{|f'(x_n)|\over \lambda_\mathbb{D} (x_n)} \ =\  o\Big(Q(x_n)\Big).
\]

Next, \cite[Theorem 6, p.\,141, Corollary 8, p.\,143]{SW} , and
\cite[p.\,188]{Tsuji} contain the following result (which may be considered 
as a local version of Beurling's theorem although the 
exceptional set here has measure zero).

\begin{theorem F*}
Suppose that $f$ is analytic and univalent in $\mathbb{D}$.
Then for almost all $\theta$,
\[
f'(z) \ =\  o\left(\sqrt{\lambda_\mathbb{D} (z)}\right)
\]
as $z\to e^{i\theta}$ uniformly in any Stolz region at
$e^{i\theta}$. In particular,
for almost all points $\zeta$ on $\partial \mathbb{D}$, the
curve $f\big([0,\zeta]\big)$ is rectifiable.
\end{theorem F*}

A slightly sharper result is also given in \cite[Lemma 4, p.\,143]{SW}.

\begin{theorem G*}
Suppose that $f$ is analytic and univalent in $\mathbb{D}$.
Then for almost all $\theta$, as $r\to 1$,
\[
\int_r^1|f'(te^{i\theta})|\, dt \ =\ 
o\left(\sqrt{\lambda_\mathbb{D} (z)}\right).
\]
\end{theorem G*}

%%%%%%%%%%%%%%%%%%%%%%%%%%%%%%%%%%%%%%%%%%%%%%%%%%%%%%%%%%%%%%%
\section{Covering maps}
%%%%%%%%%%%%%%%%%%%%%%%%%%%%%%%%%%%%%%%%%%%%%%%%%%%%%%%%%%%%%%%

If the complement of a domain $D$ in $\mathbb{C}_{\infty}$
has at least three points then $\mathbb{D}$ is the universal 
covering space of $D$ and the hyperbolic metric on $\mathbb{D}$ 
projects under any universal covering map $\pi:\mathbb{D}\to D$ 
to the hyperbolic metric $\rho_D$ of $D$ which is defined by 
the line element $\lambda_D(w)|dw|$, where
\begin{equation}
\lambda_D (\pi (z))|\pi'(z)| = \lambda_\mathbb{D}(z).
\end{equation}
It follows from this that $\pi$ is a local isometry relative 
to the two hyperbolic metrics. When $D$ is multiply connected 
$\pi$ is not injective and so it is not a global isometry. 

Let us illustrate these ideas in the particular case 
when $D$ is an annulus.  Let 
\[
q(z) = i\left({1+z\over 1-z}\right), \qquad g(z) = \log (-iz),
\qquad h(z) = \exp (iz).
\]
Then $q$ maps $\mathbb{D}$ conformally onto the upper half-plane 
$\mathbb{H}$, $g$ maps $\mathbb{H}$ conformally onto the strip 
$S= \{x+iy : |y| < \pi /2\}$, and $h$ maps $S$ (conformally,
but not bijectively) onto the annulus
\[
A \ =\  \{ z : \exp(-\pi/2) < |z| < \exp (\pi /2)\}.
\]
We write $f = h\circ g \circ q$; then $f$ is a universal cover 
map of $A$, and the hyperbolic metric of $A$ has density
$\lambda_A$, where
\[
\lambda_A\big(f(z)\big)|f'(z)|
\ =\ \lambda_\mathbb{D}(z)
\ =\  {2\over 1-|z|^2}.
\]
A straightforward calculation shows that $z\in (-1,1)$ if and 
only if $|f(z)|=1$, and that for $z\in (0,1)$, we have
$\lambda_A\big(f(z)\big) = 1$. We deduce that in this example,
\begin{equation}
{\cal E}(r) \ =\  \int_0^r|f'(t)|\,dt
\ =\  \int_0^r\lambda_A\big(f(t)\big)|f'(t)|\,dt
\ =\  \int_0^r\lambda_\mathbb{D} (t)\,dt
\ =\  \ell (r).
\end{equation}

We would like to know whether or not there is a 
{\it geometric} explanation for the fact that we have an 
exponent $1/2$ in Theorem A but not in (9.2).
There is an obvious and significant geometric difference
between the case of a univalent map $f$ of $\mathbb{D}$ onto some
domain $D$, and the case of a covering map, say
$f_1$, of the annulus: in the former case the $f\!$-images of 
geodesics in $\mathbb{D}$ accumulate only on
the boundary of $D$, whereas in the case of the annulus 
described above, we have constructed a geodesic in $\mathbb{D}$ whose 
image under the covering map does not accumulate anywhere on 
the boundary of the annulus. It is reasonable, then, to ask 
whether this geometric difference accounts for the different 
exponents in (3.1) and (9.2).
We note in passing that some type of boundedness 
condition on $f$ is necessary in order to ensure that 
${\cal E}(r)$ grows no faster than some power of $\ell (r)$.
Indeed, for the function $f(z)=i(1+z)/(1-z)$, which is a 
conformal map of $\mathbb{D}$ onto the upper half-plane $\mathbb{H}$, 
we have ${\cal E}(r) \sim \exp[\ell(r)]$ as $r\to 1$.

We shall restrict ourselves here to a description of a simple 
case of covering maps and this will be sufficient to substantiate
the ideas described above in many cases. The following
discussion is simply a generalization of the case of the
annulus described above.

Let $D$ be a bounded domain of finite connectivity bounded 
by a finite number, say $C_1,\ldots ,C_k$ of (sufficiently) 
smooth closed curves (none of which degenerate to a point).
In such circumstances there is a universal covering map 
$f:\mathbb{D}\to D$ which is invariant under a Fuchsian group $G$ 
acting on $\mathbb{D}$ and which is such that $D$ is conformally 
equivalent to the quotient surface $\mathbb{D} /G$.
These assumptions on $D$ imply the following facts; here, we 
assume familiarity with the basic theory of Fuchsian groups.

\begin{enumerate}
\item[(1)]
The group $G$ is finitely generated, and its limit set $L$ 
on $\partial \mathbb{D}$ has Hausdorff dimension strictly between 
$0$ and $1$. In particular, $L$ has positive capacity and 
zero linear measure.

\item[(2)]
Each ray from $0$ in $\mathbb{D}$ that ends at a point $e^{i\theta}$ in
$\partial\mathbb{D} \backslash L$ has a rectifiable $f\!$-image
that ends at a point on $\partial \mathbb{D}$. For such rays, 
${\cal E}(r,\theta) = O(1)$ as $r\to 1$.

\item[(3)]
Each ray from $0$ in $\mathbb{D}$ that ends at a point $e^{i\theta}$ in
$L$ has an $f\!$-image that returns infinitely often to some
compact subset of $\mathbb{D}$.
\end{enumerate}

We shall now obtain estimates of ${\cal E}(r,\theta)$ in
this situation. Roughly speaking, our result show that the
calculations given above for the annulus are typical of this 
situation.

\begin{theorem}
Let $f$ be a universal cover map of a bounded domain $D$ as 
described above. Then, for every $\theta$,
\begin{equation}
{\cal E}(r,\theta) = O\big(\ell(r)\big)
\end{equation}
as $r\to 1$. Moreover, 
\begin{enumerate}
\item[(i)]
if $e^{i\theta} \notin L$ then
${\cal E}(r,\theta) = O(1)$ as $r\to 1$, and
\item[(ii)]
if $e^{i\theta}$ is fixed by some hyperbolic element in $G$
(and such points are dense in $L$), then there is some positive
number $K$ (that depends on $\theta$) such that for all $r$,
\begin{equation}
{\cal E}(r,\theta) \geqslant K\ell(r).
\end{equation}
\end{enumerate}
\end{theorem}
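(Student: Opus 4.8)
The plan is to prove the three assertions in order, exploiting the fact that $f$ is a local hyperbolic isometry of $\mathbb{D}$ onto $D$ (by (9.1)) and that $D$ is bounded, so the ratio $\Lambda(f,z)=|f'(z)|/\lambda_\mathbb{D}(z) = 1/\lambda_D(f(z))$ is bounded above by $1/\lambda_0$, where $\lambda_0$ is the positive lower bound for $\lambda_D$ on the bounded domain $D$ (the existence of such a bound was established in Section 2, equation (2.3)). With this in hand the universal bound (9.3) is immediate: since ${\cal E}(r,\theta)$ is the integral of $\Lambda(f,\cdot)$ against hyperbolic arc length along the geodesic ray $[0,re^{i\theta})$, which has hyperbolic length $\ell(r)$, we get
\[
{\cal E}(r,\theta) = \int_0^r \Lambda(f,te^{i\theta})\,\lambda_\mathbb{D}(te^{i\theta})\,dt \leqslant \frac{1}{\lambda_0}\,\ell(r).
\]

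For part (i), when $e^{i\theta}\notin L$, I would invoke fact (2), which guarantees that the $f$-image of the ray is rectifiable with finite Euclidean length; this is exactly the statement ${\cal E}(r,\theta)=O(1)$. Geometrically, the image curve runs out to a boundary point of $D$ without recurring, so the image geodesic eventually stays in a region where $\lambda_D$ is large (i.e.\ near $\partial D$), forcing $\Lambda(f,\cdot)$ to be small and the integral to converge. The honest work here is to quote the Fuchsian-group structure theory recorded in facts (1)--(3); given those, (i) is essentially a restatement of (2).

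For the lower bound (ii), suppose $e^{i\theta}$ is fixed by a hyperbolic element $\gamma\in G$. The geodesic ray from $0$ to $e^{i\theta}$ is then (up to a bounded initial segment) asymptotic to the axis of $\gamma$, and $f$ projects this axis to a closed geodesic loop in $D=\mathbb{D}/G$; the key point is that $f$ maps the ray onto a curve that traverses this closed loop over and over. Each full traversal of the loop contributes a fixed positive Euclidean length $\ell_0>0$ (the Euclidean length of the image loop, which is strictly positive because the loop is a genuine nondegenerate curve in $D$), while costing a fixed amount $\tau(\gamma)>0$ of hyperbolic length in $\mathbb{D}$ (the translation length of $\gamma$). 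Thus after the ray has advanced hyperbolic distance $\ell(r)$, it has completed at least $\lfloor \ell(r)/\tau(\gamma)\rfloor - c$ traversals for some constant $c$, whence ${\cal E}(r,\theta) \geqslant \ell_0(\ell(r)/\tau(\gamma) - c')$, giving (9.4) with $K$ any constant slightly below $\ell_0/\tau(\gamma)$ and absorbing the bounded correction into the inequality for all $r$.

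The main obstacle is making the recurrence argument for (ii) quantitative and uniform. The subtlety is that the ray to $e^{i\theta}$ is only asymptotic to the axis of $\gamma$, not equal to it, so I must control how the $f$-image of the \emph{ray} compares with the periodic $f$-image of the \emph{axis}. The clean way to handle this is to note that the two geodesics converge (the hyperbolic distance between corresponding points tends to $0$), then use that $\Lambda(f,\cdot)=1/\lambda_D(f(\cdot))$ is continuous and bounded below away from $0$ on the compact loop, so that the Euclidean length picked up by the ray's image over each period is bounded below by a fixed fraction of $\ell_0$ once $r$ is large. That the points fixed by hyperbolic elements are dense in $L$ is a standard fact about finitely generated Fuchsian groups and may simply be cited. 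Combining the three cases completes the proof.
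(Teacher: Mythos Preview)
Your argument is correct, and for the upper bound (9.3) and for (i) it coincides with the paper's proof (the paper phrases (9.3) via Schwarz--Pick applied to $f/M$, which is equivalent to your bound $\Lambda(f,z)\leqslant 1/\lambda_0$; for (i) the paper also simply invokes fact (2)).

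For (ii), however, you take a detour that the paper avoids. You argue by counting traversals of the closed geodesic $f(\hbox{axis of }\gamma)$ and then worry---quite rightly, given that framing---about the fact that the ray from $0$ is only asymptotic to the axis. The paper's route is shorter and dissolves this ``main obstacle'' entirely: since the ray to $e^{i\theta}$ stays within bounded hyperbolic distance of the axis of $\gamma$, its $f$-image lies in a fixed compact subset of $D$; hence $\lambda_D$ is bounded above on that image, say $\lambda_D\leqslant k$. Then from the covering identity (9.1) one has the \emph{pointwise} lower bound
\[
|f'(te^{i\theta})| \;=\; \frac{\lambda_{\mathbb D}(t)}{\lambda_D\bigl(f(te^{i\theta})\bigr)} \;\geqslant\; \frac{\lambda_{\mathbb D}(t)}{k},
\]
and integrating gives ${\cal E}(r,\theta)\geqslant \ell(r)/k$ for \emph{all} $r$ at once, with no asymptotic correction to absorb. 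Your period-counting argument recovers the same conclusion but only after handling the additive error term and the ray-versus-axis discrepancy; the paper's pointwise bound on $\Lambda(f,\cdot)$ is exactly the ``clean way'' you allude to in your final paragraph, applied globally rather than period by period.
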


We suggest that the way that ${\cal E}(r,\theta)$ varies in 
comparison with $\ell(r)$ might be governed entirely by 
nature of the limit point $e^{i\theta}$ as exhibited by the 
geometric behaviour of the $f\!$-image of the geodesic ray, 
say $\gamma$, from $0$ to $e^{i\theta}$. Explicitly, we make 
the following 

\vfill\break
\begin{conjecture*}
\begin{enumerate}
\item[(i)]
If $e^{i\theta}$ is not in $L$, then $f(\gamma)$ ends at a 
single point on the boundary of $D$ (or, more generally, 
if $\partial D$ is not smooth, at a prime end on $\partial D$;
see Example 3.3), and in this case we have
${\cal E}(r,\theta) = O(\ell(r)^{1/2})$. 
\item[(ii)]
If $e^{i\theta} \in L$, then $f(\gamma)$ returns infinitely 
often to a compact subset of $D$ (this is a known fact), and 
this probably implies that ${\cal E}(r,\theta)$ grows rather 
rapidly with $r$. In this case we might expect that
${\cal E}(r,\theta) = O(\ell(r)^{\beta})$, where $\beta$
is in $(1/2,1]$ and depends on the geometric position of 
$f(\gamma)$. 
\item[(iii)]
In the extreme case when $e^{i\theta}$
is a hyperbolic fixed point, $f(\gamma)$ lies entirely 
within a compact subset $K$, say, of $D$. In this case, 
the Euclidean and hyperbolic lengths along $f(\gamma)$ are 
comparable, and we find that ${\cal E}(r,\theta)/\ell(r)$
is bounded above and below by positive finite constants
as is given by (9.3) and (9.4).
This is the case of the annulus discussed above.
\end{enumerate}
\end{conjecture*}

\begin{proof}[of Theorem 9.1] 
As $D$ is bounded it lies in some disc $\{|z|<M\}$.
If we now apply the Schwarz-Pick Lemma to the function $f/M$ 
(which maps $\mathbb{D}$ into itself) we obtain
\[
{2|f'(z)|/M \over 1-(|f(z)|/M)^2} \ \leqslant\  \lambda_\mathbb{D} (z).
\]
This gives $|f'(z)| \leqslant (M/2)\lambda_\mathbb{D}(z)$,
and integration of both sides gives a sharper form of (9.3),
namely
${\cal E}(r,\theta) \leqslant (M/2)\ell(r)$.

To establish (9.4) suppose that $e^{i\theta}$ is a fixed point 
of some hyperbolic element in $G$; then the $f\!$-image of the
ray to $e^{i\theta}$ lies in a compact subset of $D$ so that
$\lambda_D$ is bounded above, say by a positive number $k$, on 
this image. As the covering map is a local isometry, we find 
from (9.1) that
\[
{\cal E}(r,\theta) 
\ =\  \int_0^r |f'(te^{i\theta})| \,dt
\ \geqslant\  {1\over k}\int_0^r |f'(te^{i\theta})| 
\lambda_D\big(f(te^{i\theta})\big)\,dt 
\ =\  {\ell(r)\over k}.
\]
This completes the proof.
\end{proof}

Theorem A shows that in a simply connected domain with finite 
area we have ${\cal E} = o(\ell^{1/2})$. However, we have seen 
that ${\cal E}$ can grow like the first power $\ell$ when we
measure lengths along a circular path in an annulus. It seems
likely that, given any $\alpha$ in $(0,1)$, one could construct 
a multiply connected domain with finite area in which 
${\cal E} \sim \ell^\alpha$ for a suitable geodesic.  We give 
a brief sketch of an idea which we believe will work; however, 
the estimates needed would be very delicate and we have not 
completed the details.

Consider a domain $D$ obtained by abutting a sequence of annuli $A_k$,
as in the diagram. Here $A_k$ is an annulus of inner radius
$\textstyle{1\over 2} r_k$ and outer radius $2r_k$. The total area is
approximately $\sum \textstyle{{15}\over{4}} \pi r_k^2$, so this sum
must converge. For any sequence of natural numbers $n_k$, there is a
hyperbolic geodesic $\gamma$ in $D$ that starts at a fixed point $z_o
\in A_1$ and winds $n_k + \textstyle{1\over 2}$ times around $A_k$
before moving to $A_{k+1}$. (There is such a geodesic since every
homotopy class contains a geodesic.) The Euclidean length of the
geodesic from $z_o$ until it leaves $A_N$ is asymptotically:
\[
{\cal E}_N \ \sim\  \sum_{k=1}^N (n_k + \textstyle{1\over 2}) 2\pi r_k\ .
\]

\begin{center}
\epsfig{file=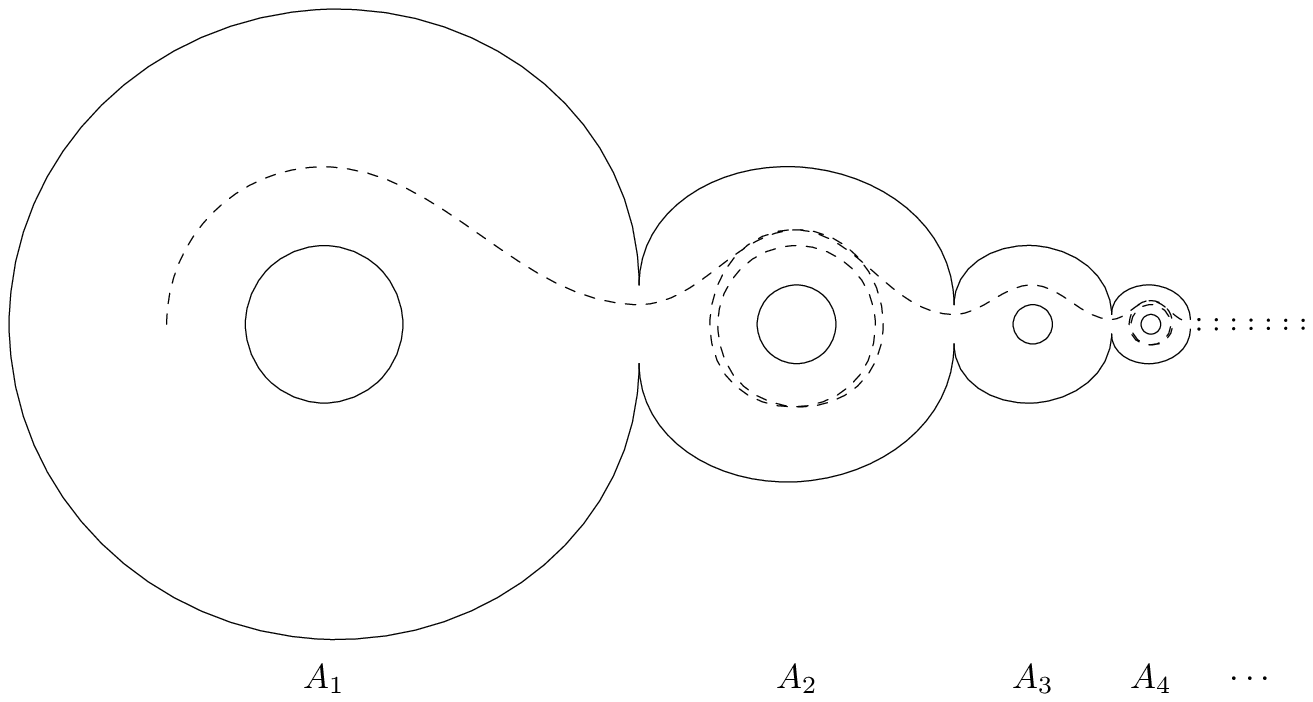}
\end{center}

\centerline{{\bf Figure 2.}\ (The geodesic is shown as a dashed line.)}
\bigskip

The part of $\gamma$ within $A_k$ lies at a distance of approximately
$\textstyle{1\over 2} r_k$ from the boundary, so its hyperbolic length
is roughly 
\[
(n_k + \textstyle{1\over 2}) 2\pi r_k {2\over{r_k}} \ =\  
(n_k + \textstyle{1\over 2}) 4\pi.
\]
Thus the hyperbolic length of the geodesic from $z_o$
until it leaves $A_N$ is asymptotically:
\[
\ell_N \ \sim\  \sum_{k=1}^N (n_k + \textstyle{1\over 2}) 4\pi.
\]

For any $\beta > 0$, set $r_k = 1/k$ and $n_k = k^\beta$. Then $\sum
r_k^2$ converges, so $D$ will have finite area. Moreover, 
\[
{\cal E}_N \ \sim\  \sum_{k=1}^N n_k r_k 
\ =\  \sum_{k=1}^N k^{\beta - 1} 
\ \sim\  N^\beta
\]
and 
\[
\ell_N \ \sim\  \sum_{k=1}^N n_k 
\ =\  \sum_{k=1}^N k^\beta 
\ \sim\  N^{\beta + 1}
\]
so ${\cal E}_N \sim \ell^{\beta/(\beta+1)}$.

%%%%%%%%%%%%%%%%%%%%%%%%%%%%%%%%%%%%%%%%%%%%%%%%%%%%%%%%%%%
\section{The hyperbolic length of the image curve} 
%%%%%%%%%%%%%%%%%%%%%%%%%%%%%%%%%%%%%%%%%%%%%%%%%%%%%%%%%%%

In this section we shall consider functions $f$ that are
analytic in $\mathbb{D}$, and such that the image domain $f(\mathbb{D})$ 
supports a hyperbolic metric, and we shall say that $f$ is 
{\it hyperbolic} in $\mathbb{D}$ whenever this is so.
Not every analytic map in $\mathbb{D}$ is hyperbolic;
for example, if $g$ maps $\mathbb{D}$ conformally onto the strip 
given by $0<{\rm Im}[z]<3\pi$, then $\exp\circ g$ is not 
hyperbolic. However, many functions analytic in $\mathbb{D}$ are
hyperbolic; for example, all bounded functions, and all 
univalent functions, are.

Suppose now that $f$ is analytic in $\mathbb{D}$ and 
hyperbolic. Then, instead of comparing
${\cal E}(r)$ with the hyperbolic length $\ell(r)$ of 
$[0,r]$, we can attempt to 
compare it with the hyperbolic length of the image curve
$f([0,r])$. Let us denote this length by ${\cal H}(r)$;
thus ${\cal E}(r)$ and ${\cal H}(r)$ denote the Euclidean
and hyperbolic lengths, respectively, of $f([0,r])$, and
we are asking for a comparison of the Euclidean and hyperbolic 
lengths of the curve $f([0,r])$.
We emphasize that the problem of estimating ${\cal E}(r)$ 
in terms of ${\cal H}(r)$ is more difficult than the 
original problem of obtaining an estimate in terms of 
$\ell (r)$ because the Schwarz-Pick Lemma shows that for 
any analytic hyperbolic $f$ we have ${\cal H}(r) \leqslant \ell(r)$.
Of course, if $f$ is univalent in $\mathbb{D}$ (or if $f$ is a 
covering map of $\mathbb{D}$ onto $f(\mathbb{D} )$), then ${\cal H}(r)=
\ell (r)$ so the two problems are the same.
We shall now prove the following result.

\begin{theorem}
Suppose that $f$ is analytic and bounded in $\mathbb{D}$, and that
$f$ is hyperbolic. Then as $r\to 1$. 
\begin{equation}
{\cal E}(r) =O\big({\cal H}(r)\big)
\end{equation}
\end{theorem}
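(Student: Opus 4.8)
The plan is to reduce the claim to a single pointwise lower bound on the hyperbolic density of the image domain. Writing $D = f(\mathbb{D})$ and taking $\theta = 0$, I would first record that
\[
{\cal E}(r) = \int_0^r |f'(t)|\,dt, \qquad {\cal H}(r) = \int_0^r \lambda_D\big(f(t)\big)\,|f'(t)|\,dt,
\]
where ${\cal H}(r)$ is the hyperbolic length in $D$ of the (possibly non-injectively traced) curve $f([0,r])$. The two integrands differ only by the factor $\lambda_D(f(t))$, so if I can produce a constant $c>0$ with $\lambda_D(w) \geqslant c$ for every $w\in D$, then a pointwise comparison of the integrands gives ${\cal H}(r) \geqslant c\,{\cal E}(r)$, whence ${\cal E}(r) \leqslant c^{-1}{\cal H}(r) = O\big({\cal H}(r)\big)$, which is exactly (10.1).

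The heart of the matter is therefore the uniform lower bound on $\lambda_D$, and this is where boundedness enters. Put $M = \sup_{\mathbb{D}}|f|$. Since $f$ is hyperbolic it is non-constant, so the maximum principle gives $|f(z)| < M$ throughout $\mathbb{D}$ and hence $D \subseteq \mathbb{D}(0,M)$. The hyperbolic metric is monotone under inclusion — the inclusion $D \hookrightarrow \mathbb{D}(0,M)$ is holomorphic and so contracts hyperbolic length by the Schwarz-Pick lemma — so $\lambda_D(w) \geqslant \lambda_{\mathbb{D}(0,M)}(w)$ for $w\in D$. Transporting $\lambda_\mathbb{D}$ by $z\mapsto Mz$ gives $\lambda_{\mathbb{D}(0,M)}(w) = 2M/(M^2-|w|^2) \geqslant 2/M$, so $c = 2/M$ works and I obtain the explicit bound ${\cal E}(r) \leqslant \frac{1}{2}M\,{\cal H}(r)$.

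I expect no serious obstacle here; the only real content is the lower bound just described, and the two points to be careful about are small. First, one should use the disc-monotonicity bound above rather than a distance-based estimate of the shape (2.2), whose lower half is only guaranteed for simply connected $D$, whereas here $D = f(\mathbb{D})$ may be multiply connected. Second, it is worth flagging that the hypothesis that $f$ be hyperbolic is in fact already implied by boundedness (a bounded domain has infinite complement), whereas boundedness itself is genuinely essential: for the conformal map onto the upper half-plane one has ${\cal E}(r)\sim\exp[\ell(r)]$ while ${\cal H}(r)=\ell(r)$, so (10.1) must fail without it. Finally, I would note that this argument delivers only the linear bound $O\big({\cal H}(r)\big)$ and nothing resembling an $o\big({\cal H}(r)^{1/2}\big)$ refinement — consistent with the earlier observation that, because ${\cal H}(r)\leqslant\ell(r)$, estimating ${\cal E}(r)$ against ${\cal H}(r)$ in place of $\ell(r)$ is the genuinely harder problem.
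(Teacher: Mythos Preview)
Your argument is correct and, in fact, more direct than the paper's. Both proofs hinge on the same observation: boundedness of $D=f(\mathbb{D})$ forces a uniform positive lower bound on $\lambda_D$. You then immediately compare the integrands pointwise to get ${\cal H}(r)\geqslant c\,{\cal E}(r)$. The paper instead routes through the Cauchy--Schwarz inequality, writing
\[
{\cal E}(r)^2 \ \leqslant\ {\cal H}(r)\int_0^r \frac{|f'(t)|}{\lambda_D(f(t))}\,dt,
\]
and only then invokes the lower bound $\lambda_D\geqslant\mu$ to dominate the remaining integral by ${\cal E}(r)/\mu$, arriving at the same inequality ${\cal E}(r)\leqslant {\cal H}(r)/\mu$. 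The Cauchy--Schwarz step is a vestige of the method used for Theorem~A and buys nothing here; your pointwise comparison is the cleaner route. Your care in deriving the lower bound via the inclusion $D\subset\mathbb{D}(0,M)$ and Schwarz--Pick monotonicity, rather than via the distance estimate (2.2), is also well placed, since $D$ need not be simply connected.
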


\begin{proof}
We remark first that the example of the universal covering map of an annulus
(or indeed, any covering map) shows that one cannot do better than the
estimate (10.1) for in these cases, ${\cal H}(r)=\ell(r)$.  The proof follows
the same lines as the proof of Theorem A in \cite{Keogh}, except that we use
integrals instead of power series. Using the Cauchy-Schwarz inequality we
obtain
\begin{eqnarray*}
{\cal E}(r)^2
&~=~& \left(\int_0^r |f'(t)|\,dt\right)^2 \\
&~=~& \left(\int_0^r 
\sqrt{\lambda_D(f(t))|f'(t)|}\sqrt{|f'(t)|
\over\lambda_D (f(t))}\,dt\right)^2 \\
&~\leqslant~& \left(\int_0^r\lambda_D(f(t))|f'(t)|\,dt \right)
\left(\int_0^r{|f'(t)|\over \lambda_D (f(t))}\,dt\right)\\
&~=~& {\cal H}(r)
\left(\int_0^r{|f'(t)|\over \lambda_D (f(t))}\,dt\right).\\
\end{eqnarray*}
Now as $D$ is bounded, $\lambda_D$ has a positive lower bound,
say $\mu$, in $D$ and we obtain the inequality
${\cal E}(r)^2 \leqslant {\cal H}(r){\cal E}(r)/\mu$, or
${\cal E}(r) \leqslant {\cal H}(r)/\mu$.
This completes the proof of Theorem 10.1.
\end{proof}

Finally, we observe that the proof of Theorem 10.1 also provides
a proof of the following result.

\begin{theorem}
Suppose that $f$ is analytic and hyperbolic in $\mathbb{D}$, and 
that the function $\lambda_D$ is bounded below on the
ray $[0,e^{i\theta})$. Then {\rm (10.1)} holds as $r\to 1$. 
\end{theorem}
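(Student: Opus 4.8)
The plan is to reuse the proof of Theorem 10.1 essentially word for word; the only new observation is that that argument never exploits the boundedness of $f$ except through the existence of a positive lower bound for $\lambda_D$, and moreover $\lambda_D$ is only ever evaluated at points of the image curve $f([0,re^{i\theta}])$. Hence a lower bound for $\lambda_D$ along this single curve---which is exactly what the hypothesis supplies---already suffices. By the rotational symmetry of the problem it is enough to treat an arbitrary ray $[0,e^{i\theta})$, along which I write ${\cal E}(r)$ and ${\cal H}(r)$ for the Euclidean and hyperbolic lengths of its $f$-image; note that ${\cal E}(r)<\infty$ for each fixed $r<1$ since $f'$ is continuous on $[0,re^{i\theta}]$, and ${\cal H}(r)\leqslant\ell(r)<\infty$ by the Schwarz--Pick lemma.

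First I would factor the integrand of ${\cal E}(r)$ exactly as in Theorem 10.1, writing
\[
|f'(te^{i\theta})| \ =\ \sqrt{\lambda_D\big(f(te^{i\theta})\big)\,|f'(te^{i\theta})|}\;
\sqrt{\frac{|f'(te^{i\theta})|}{\lambda_D\big(f(te^{i\theta})\big)}},
\]
and apply the Cauchy--Schwarz inequality to the two square-root factors. Recognising $\int_0^r\lambda_D(f(te^{i\theta}))|f'(te^{i\theta})|\,dt$ as the hyperbolic length ${\cal H}(r)$ of the image curve, this yields
\[
{\cal E}(r)^2 \ \leqslant\ {\cal H}(r)\int_0^r
\frac{|f'(te^{i\theta})|}{\lambda_D\big(f(te^{i\theta})\big)}\,dt.
\]
Every appearance of $\lambda_D$ on the right-hand side is at a point $f(te^{i\theta})$ of the image of the ray, and this is the key point that lets the weaker hypothesis take over from the global boundedness used previously.

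Finally I would invoke the hypothesis in the form $\lambda_D(f(te^{i\theta}))\geqslant\mu>0$ for all $t\in[0,1)$. The remaining integral is then at most $\mu^{-1}\int_0^r|f'(te^{i\theta})|\,dt={\cal E}(r)/\mu$, so that ${\cal E}(r)^2\leqslant{\cal H}(r)\,{\cal E}(r)/\mu$; dividing by the finite quantity ${\cal E}(r)$ gives ${\cal E}(r)\leqslant{\cal H}(r)/\mu$, which is (10.1). I expect no genuine obstacle here: the entire content is the remark that the lower bound on $\lambda_D$ supplied by a bounded domain in Theorem 10.1 may be localised to the image of the single ray, together with the trivial check that nothing in the computation depends on the particular direction $\theta$.
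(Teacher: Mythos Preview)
Your proposal is correct and is exactly the approach the paper intends: the paper does not give a separate proof of this theorem but simply remarks that ``the proof of Theorem 10.1 also provides a proof'' of it, and your write-up spells out precisely that observation---namely that the Cauchy--Schwarz step only invokes $\lambda_D$ at points $f(te^{i\theta})$ of the image curve, so a lower bound along that single ray suffices in place of the global one.
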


%%%%%%%%%%%%%%%%%%%%%%%%%%%%%%%%%%%%%%%%%%%%%%%%%%%%%%%%%%%
\section{Closing remarks} 
%%%%%%%%%%%%%%%%%%%%%%%%%%%%%%%%%%%%%%%%%%%%%%%%%%%%%%%%%%%

We end with the remark that in the general case of a
map $f$ analytic in $\mathbb{D}$, if $f$ is smooth enough on, say,
an arc of $\partial\mathbb{D}$ centred at $e^{i\theta}$, then we 
might expect that for a sufficiently small positive $\delta$,
the $f\!$-image of $\mathbb{D}\cap \{z:|z-e^{i\theta}|<\delta\}$
has finite area. If this is so then, from Theorem B,
we have
\begin{equation}
{\cal E}(r,\theta) = o\big(\sqrt{\ell(r)}\big)
\end{equation}
as $r\to 1$. Thus, in some general sense, if the 
$f\!$-image of a geodesic ray ends (smoothly) on
the boundary of $f(\mathbb{D} )$, or, more generally, on the
boundary of the Riemann surface for the function $f^{-1}$,
then we expect that (11.1) holds. By contrast, if the 
$f\!$-image of the geodesic ray circulates infinitely often 
around one of the `holes' in $f(\mathbb{D} )$, then 
${\cal E}(r,\theta)$ and $\ell(r)$ are of a comparable 
size as $r\to 1$.
It may be that more delicate estimates relate the comparative sizes
of ${\cal E}(r,\theta)$ and $\ell(r)$ to the finer detail
of the geometry of the images of the geodesic rays.

%%%%%%%%%%%%%%%%%%%%%%%%%%%%%%%%%%%%%%%%%%%%%%%%%%%%%%%%%%%%%%%
\oneappendix
\section{Appendix: Hyperbolic geometry}
%%%%%%%%%%%%%%%%%%%%%%%%%%%%%%%%%%%%%%%%%%%%%%%%%%%%%%%%%%%%%%%

We complete this paper by giving the proofs of Theorem 3.4, and of Lemmas 6.1
and 6.2.  We begin with a proof of Theorem 3.4, This will follow from a result
showing how hyperbolic geodesics move when we expand a domain and we will use
the observation that a geodesic is the level set for harmonic measure.  Let
$\alpha$ be a closed arc of the unit circle and $\beta$ the complementary arc.
Then the harmonic measure for $\alpha$ in ${\mathbb D}$ is the unique harmonic
function $\omega : {\mathbb D} \to (0, 1)$ with boundary values $0$ on $\beta$
and $1$ on the interior of $\alpha$.  There are no boundary values at the
endpoints $a_0, a_1$ of $\alpha$ but the harmonic measure is bounded there.
It is simple to calculate the harmonic measure explicitly and see that the
level set $\{ z\in {\mathbb D} : \omega(z) = \frac{1}{2} \}$ is the hyperbolic
geodesic from $a_0$ to $a_1$.

A similar result holds for domains with a Jordan boundary.  Let $E$ be a
simply-connected, hyperbolic, Jordan domain with $\alpha$ as a closed arc of
$\partial E$ and $\beta$ as the complementary arc of $\partial E$.  Then the
Riemann map $f : E \to {\mathbb D}$ extends continuously to the boundary
$\partial E$ and maps it homeomorphically onto the unit circle.  If $\omega$
is the harmonic measure of the arc $f(\alpha)$ in ${\mathbb D}$, then $\theta
= \omega \circ f$ is the harmonic measure of $\alpha$ in $E$.  This function
$\theta : E \to (0, 1)$ is harmonic with boundary values $0$ on $\beta$ and
$1$ on the interior of $\alpha$.  The hyperbolic geodesic between the
endpoints $a_0, a_1$ of $\alpha$ is then the level set $\{ z\in E : \theta(z)
= \frac{1}{2} \}$ since $f$ is a hyperbolic isometry.

\begin{proposition*}
Let $D, E$ be two simply-connected, hyperbolic domains that satisfy:
\begin{enumerate}
\item $E$ is a Jordan domain whose boundary $\partial E$ consists of a closed
  arc $\alpha$, joining the endpoints $a_0$ and $a_1$, and the complementary
  arc $\beta$;
\item $E$ is a subset of $D$ with $\alpha \subset \partial D$ and $\beta
  \subset D$.
\end{enumerate}
Then a geodesic $\delta$ from $a_0$ to $a_1$ in $D$ and the geodesic $\eta$
from $a_0$ to $a_1$ in $E$ do not meet in $E$.  Indeed, $\eta$ separates
$\delta$ and $\alpha$.
\end{proposition*}

\begin{proof}
We first show that it is sufficient to prove the result when $D$ is the unit
disc.  Let $f : D \to {\mathbb D}$ be a Riemann map for $D$.  Since $E$ is a
Jordan domain, each boundary point $\zeta$ of $\alpha$ is accessible.  Also,
since $E \subset D$, the point $\zeta$ is also an accessible boundary point of
$D$ defined by an approach to $\zeta$ within $E$.  Each such point $\zeta \in
\alpha$ corresponds to a unique prime end for $D$ defined by an approach to
$\zeta$ within $E$.  These prime ends fill out a closed arc
$\widetilde{\alpha}$ of $\partial{\mathbb D}$ with endpoints $\widetilde{a}_0,
\widetilde{a}_1$.  The image ${\mathbb E} = f(E)$ is a subdomain of the unit
disc ${\mathbb D}$ and is bounded by the closed arc $\widetilde{\alpha}$ and
the arc $\widetilde{\beta} = f(\beta)$ that lies in the interior of ${\mathbb
D}$.  The conformal map $f$ preserves geodesics, so it is sufficient to prove
the proposition when $D$ is the unit disc.

In this situation, both $D$ and $E$ are Jordan domains and so the harmonic
measures extend continuously to the boundary except at endpoints.  Let
$\theta : E \to (0, 1)$ be the harmonic measure of $\alpha$ in $E$ and let
$\omega : D \to (0, 1)$ be the harmonic measure of $\alpha$ in the unit disc
$D$.  Then $\theta$ has boundary values $0$ on $\beta$ and $1$ on the interior
of $\alpha$.  The restriction of $\omega$ to $E$ has boundary values
$\omega(z) > 0$ at each $z \in \beta$ and $1$ at each interior point of
$\alpha$.  Hence, 
\[
\omega(z) > \theta(z) \qquad \hbox{ for each } z \in E \ .
\]
In particular, the geodesics $\delta = \{z \in D : \omega(z) = \frac{1}{2} \}$
and $\eta = \{ z\in E : \theta(z) = \frac{1}{2} \}$ are disjoint.
Furthermore, $\theta(z) < \frac{1}{2}$ for each $z\in \delta$, so the
geodesic $\eta$ lies in the region $\{ z\in D : 1 > \omega(z) > \frac{1}{2} \}$
bounded by $\alpha$ and $\delta$. 

\end{proof}

We now complete the proof of Theorem 3.4.  It will suffice to show that
the geodesic $\gamma$ in $D$ does not cross the segments $[-1, -k] \times
\{0\}$ and $[k, 1] \times \{0\}$ for some constant $k$ with $0 < k < 1$.
Let $E$ be the square $(0, 1) \times (-\frac{1}{2}, \frac{1}{2})$,  Then $E$ is
a Jordan domain lying within $D$ and the line segment $\alpha = \{1\} \times
[-\frac{1}{2}, \frac{1}{2}]$ lies in the boundary of both $E$ and $D$.  Label
the endpoints of this line segment $a_0 = (1, -\frac{1}{2})$ and $a_1 = (1,
\frac{1}{2})$.  Let $\delta$ be the hyperbolic geodesic in $D$ joining $a_0$
to $a_1$ and let $\eta$ be the hyperbolic geodesic in $E$ joining $a_0$
to $a_1$. The reader is urged to draw a diagram.  

The arcs $\eta$ and $\alpha$ bound a Jordan region that intersects the
$x$-axis in some segment $[k, 1]$.  The last proposition shows that the
geodesic $\delta$ in $D$ can not meet this segment.  

By hypothesis, the geodesic $\gamma$ in $D$ does not have an endpoint in
$\alpha$.  By using the Riemann map from $D$ to the unit disc, we see that
$\gamma$ can not meet the region bounded by $\delta$ and $\alpha$, or it would
have at least one endpoint in $\alpha$. Therefore it certainly can not meet
the segment $[k, 1]$.  An entirely similar argument shows that $\gamma$ can
not meet $[-1, -k]$ and completes the proof of Theorem 3.4.

For further information about the hyperbolic geometry of
rectangles, see \cite{Beardon1} and \cite{Beardon2}.

Next, we give a formal statement and proof of Lemma 6.1.

\begin{lemma A*}
Suppose that $f$ is analytic in $\mathbb{D}$, and that $\Delta$ 
is a closed hyperbolic disc with hyperbolic centre $z_0$. Then
\begin{equation}
f(z_0) \ =\  {1\over A_h(\Delta)}
\int\int_{\Delta} f(z) \lambda_\mathbb{D}(z)^2\, dxdy,
\end{equation}
where $A_h(\Delta)$ is the hyperbolic area of $\Delta$.
\end{lemma A*}

\begin{proof}
First we prove this when $z_0=0$. In this case $\Delta$ is of the form
$\{z:|z|\leqslant R\}$, where $R<1$, and the result follows from the Euclidean
form of the mean value property.  Explicitly, we have
\[
\int\int_{\Delta} f(z) \lambda_\mathbb{D}(z)^2\, dx\,dy 
\ =\  \int_0^R\int_{\theta=0}^{2\pi}
{4f(re^{i\theta})\over (1-r^2)^2}\,r\,dr\,d\theta.
\]
We now integrate with respect to $\theta$ first, and using the fact that
$f(0)$ is the mean value of $f$ over the circle $|z|=r$, we obtain
\[
\int\int_{\Delta} f(z) \lambda_\mathbb{D}(z)^2 \,dx\,dy 
\ =\  8\pi f(0)\int_0^R
{1\over (1-r^2)^2}\,r\,dr\,d\theta.
\]
As this holds when $f$ is the constant function $1$, the integral on the right
must be $A_h(\Delta)/8\pi$ and this gives (A.1) when $z_0=0$.

Now suppose that $\Delta$ is any hyperbolic disc with centre $z_0$, say. There
is a M\"obius map $\gamma$ of $\mathbb{D}$ onto itself such that
$\gamma(\Delta) = \{z:|z|\leqslant R\}=\Delta'$, say, and $\gamma(z_0)=0$.
Now the function $F$ given by $F=f\circ \gamma^{-1}$ is analytic in $\Delta'$
and $F(0) = f(z_0)$ so that $f(z_0)$ is the average value of $F$ over the disc
$\Delta'$. As
\[
\lambda_\mathbb{D} \big(\gamma(z)\big) |\gamma'(z)| 
= \lambda_\mathbb{D} (z)
\]
(this is just (3.2) again), this shows that
\begin{eqnarray*}
A_h(\Delta)f(z_0) 
&~=~& \int\int_{\Delta'}F(z)\lambda_\mathbb{D} (w)^2\,dudv\\
&~=~& \int\int_{\Delta}F\gamma(z)
\lambda_\mathbb{D} \big(\gamma(z)\big)^2|\gamma'(z)|^2\,dxdy
\ =\  \int\int_{\Delta}f(z)
\lambda_\mathbb{D}(z)^2\,dxdy\\
\end{eqnarray*}
as required.
\end{proof}

Finally, we give the proof of Lemma 6.2.

\begin{proof}[of Lemma 6.2]
This is easy. Take $z$ and $w$ in $\mathbb{D}$, and without loss
of generality we may suppose that $|z| \leqslant |w|$. Then
\begin{eqnarray*}
\exp \rho (z,w) 
&~\geqslant~& \exp \rho (|z|,|w|) \\
&~\geqslant~& \exp \big[\rho (0,|w|)- \rho (0,|z|)\big] \\
&~=~& \exp \left[ \log\left({1+|w|\over 1-|w|}\right)
-\log\left({1+|z|\over 1-|z|}\right)\right]\\
&~=~& \left({1+|w|\over 1+|z|}\right)^2{1-|z|^2\over 1-|w|^2}\\
&~\geqslant~& {\lambda_\mathbb{D} (w) \over 4\lambda_\mathbb{D} (z)}.
\end{eqnarray*}
The other inequality follows by symmetry.
\end{proof}

%%%%%%%%%%%%%%%%%%%%%%%%%%%%%%%%%%%%%%%%%%%%%%%%%%%%%%%%%%%%%%%

\affiliationone{% in this example, two authors share an institution
   Department of Pure Mathematics and Mathematical Statistics\\
   Wilberforce Road\\
   Cambridge CB3 0WB\\
   Great Britain
   \email{afb@dpmms.cam.ac.uk\\
     tkc@dpmms.cam.ac.uk}}

\end{document}